\newcommand{\url}[1]{#1} 
\definecolor{gray}{rgb}{0.2,0.2,.2}
\DeclareMathOperator{\sgn}{\mathrm{sgn}}
\newcommand{\fspace}[1]{{\mathsf{#1}}}
\newcommand{\fspaceL}{\fspace{L}}
\newcommand{\ol}[1]{{\overline{#1}}}
\newcommand{\Rset}{{\mathbb{R}}}
\newcommand{\Nset}{{\mathbb{N}}}
\newcommand{\cointerval}[2]{[#1,\,#2)}%
\newcommand{\oointerval}[2]{(#1,\,#2)}%
\newcommand{\ccinterval}[2]{[#1,\,#2]}%
\newcommand{\DO}[1]{{O\at{#1}}}
\newcommand{\odd}{{\rm \,odd}}
\newcommand{\even}{{\rm \,even}}
\newlength{\mhpicDwidth}
\newlength{\mhpicDvsep}
\newlength{\mhpicDhsep}
\newlength{\mhpicPwidth}
\newlength{\mhpicPvsep}
\newlength{\mhpicPhsep}
\newlength{\mhpicWhsep}
\newcommand{\pair}[2]{{\left({#1},\,{#2}\right)}}
\newcommand{\bskp}[2]{{\big\langle{#1},\,{#2}\big\rangle}}
\newcommand{\at}[1]{{\left({#1}\right)}}
\newcommand{\bat}[1]{{\big(#1\big)}}
\newcommand{\Bat}[1]{{\Big(#1\Big)}}
\newcommand{\ul}[1]{\underline{#1}}
\newcommand{\bigpar}{\par\quad\newline\noindent}
\newcommand{\norm}[1]{\left\|{#1}\right\|}
\newcommand{\bnorm}[1]{\big\|{#1}\big\|}
\newcommand{\nnorm}[1]{\|{#1}\|}
\newcommand{\abs}[1]{\left|{#1}\right|}
\newcommand{\babs}[1]{\big|{#1}\big|}
\newcommand{\dint}[1]{\,\mathrm{d}#1}
\newcommand{\al}{{\alpha}}
\newcommand{\ka}{{\kappa}}
\newcommand{\la}{{\lambda}}
\newcommand{\si}{{\sigma}}
\newcommand{\calA}{\mathcal{A}}
\newcommand{\calF}{\mathcal{F}}
\newcommand{\calL}{\mathcal{L}}
\newcommand{\calN}{\mathcal{N}}
\newcommand{\calU}{\mathcal{U}}
\theoremstyle{plain}
\newtheorem{theorem}{Theorem}[]
\newtheorem{lemma} [theorem]{Lemma}
\newtheorem{proposition}[theorem]{Proposition}
\newtheorem{conjecture}[theorem]{Conjecture}
\newtheorem{mresult}[theorem]{Main Result}
\newtheorem{assumption} [theorem]{Assumption}
\theoremstyle{definition}
\begin{document}
%
%
\title{A uniqueness result for a simple superlinear eigenvalue problem}
\date{\today}
\author{%
Michael Herrmann%
\footnote{Technische Universit\"at Braunschweig, Germany, {\tt michael.herrmann@tu-braunschweig.de}}
\and
Karsten Matthies%
\footnote{University of Bath, United Kingdom, {\tt k.matthies@bath.ac.uk}}
} %
\maketitle
%
%
%
%
\vspace{-0.025\textheight}
\begin{abstract}
We study the eigenvalue problem for a superlinear convolution operator in the special case of bilinear constitutive laws and establish the existence and uniqueness of a one-parameter family of nonlinear eigenfunctions under a topological shape constraint. Our proof uses a nonlinear change of scalar parameters and applies Krein-Rutmann arguments to a linear substitute problem. We also present numerical simulations and discuss the asymptotics of two limiting cases.
\end{abstract}
%
%
%
\quad\newline\noindent%
\begin{minipage}[t]{0.15\textwidth}%
 Keywords:
\end{minipage}%
\begin{minipage}[t]{0.8\textwidth}%
\emph{nonlinear eigenvalue problems}, \emph{nonlocal coherent structures},\\ \emph{Krein-Rutmann theorems},   \emph{asymptotic analysis of nonlinear integral operators}
\end{minipage}%
\medskip
\newline\noindent
\begin{minipage}[t]{0.15\textwidth}%
MSC (2010): %
\end{minipage}%
\begin{minipage}[t]{0.8\textwidth}%
45G10,  
45M05, 
47J10 	
\end{minipage}%
%
%
%
\setcounter{tocdepth}{3}
\setcounter{secnumdepth}{3}{\scriptsize{\tableofcontents}}
%
%
\section{Introduction}\label{sect:intro}
%
%
Nonlinear analogues to linear eigenvalue problems arise in many branches of the sciences and often model coherent structures in spatially extended dynamical systems. Examples will be discussed below in greater detail and include traveling waves in Hamiltonian lattices as well as certain nonlocal aggregation models. Nonlinear eigenvalue equations often combine nonlinear superpositions with linear pseudo-differential operators, where the latter can represent convolutions, spatial derivatives, or the solution of an elliptic auxiliary problem.
\par
A simple and spatially one-dimensional prototype is the scalar equation 
\begin{align}
\label{Eqn:EV1}
\sigma \, u = a\ast f\at{u}
\end{align}
which involves a localized convolution kernel $a$ and a given nonlinear function $f$. The problem is to find solution pairs $\pair{\si}{u}$ consisting of an real eigenvalue $\si$ and a nontrivial scalar eigenfunction $u$ which might be of  periodic or homoclinic type. As explained below, the existence of solutions to \eqref{Eqn:EV1} can be established by many different approaches and this has already been done for special choices or certain classes of $a$ and $f$. The uniqueness problem, however, is notoriously difficult in the superlinear case and we are not aware of any rigorous result that applies to \eqref{Eqn:EV1}.
\par
As a first step towards a more general uniqueness theory we study in this paper the case of bilinear functions $f$ which are piecewise linear with two slopes. This simplifying assumption allows us to transform \eqref{Eqn:EV1} into the linear eigenvalue problem for a modified convolution operator with cut-off parameter $\xi$.  We further suppose that both the kernel $a$ and the eigenfunction $u$ are nonnegative, even, and unimodal. This shape constraint provides an existence and uniqueness result for the linear substitute problem thanks to a variant of the classical Krein-Rutmann theorem. Moreover, it ensures that the nonlinear relation between $\si$ and $\xi$ is bijective. The combination of both arguments implies an uniqueness result as well as a novel existence proof for the nonlinear eigenvalue problem \eqref{Eqn:EV1}.
\par
Before we specify our assumptions and findings, we continue with an informal overview on possible applications and give a more detailed discussion of the underlying mathematical problems. 
%
%
%
\paragraph{Hamiltonian lattice waves}
%
Traveling waves in Hamiltonian lattices can be viewed as the nonlinear analogues to plane-wave excitations and provide the building blocks for more complex solution patterns. The prototypical example of such spatially discrete waves propagate in Fermi-Pasta-Ulam-Tsingou (FPUT) chains with nearest neaghbor interactions and comply with the advance-delay differential equation
\begin{align}
\label{Eqn:FPU}
\si\, u^{\prime\prime}\at{x}=f\bat{u\at{x+1}}+f\bat{u\at{x-1}}-2\,f\bat{u\at{x}}\,,
\end{align}
see for instance \cite{FW94, FV99, Her10} for more background information. Here, $\si$ is the squared wave speed and $u$ denotes the unknown profile function for the atomic distances. Moreover, $x$ stands for the spatial variable in the comoving frame and $f$ abbreviates the nonlinear stress-strain relation, which is usually be given by the derivative of an  interaction potential. After twofold integration with respect to $x$ --- and eliminating the constants of integration by assuming a homoclinic wave profile --- equation \eqref{Eqn:FPU} can be transformed into the eigenvalue problem \eqref{Eqn:EV1} with a tent-map kernel, see equation \eqref{Eqn:Kernel2} below. Another, but closely related, example is the first order equation
\begin{align}
\label{Eqn:DCL}
\si \,u^\prime\at{x} = f\bat{u\at{x+1}}-f\bat{u\at{x-1}}\,,
\end{align}
which describes the traveling waves in a Hamiltonian semidiscretization of scalar conservation laws, see \cite{Her12a}. It is likweise equivalent to \eqref{Eqn:EV1}, but now equipped with the piecewise constant kernel \eqref{Eqn:Kernel3}.
\par 
For FPUT chains, the existence of periodic or solitary waves has been established in different frameworks and the employed methods include critical point techniques \cite{Pan05}, constrained optimization \cite{FW94,FV99,Her11b,HM20}, and asymptotic or perturbative arguments \cite{FP99,IJ05,Jam12}. More recent work in this field concerns waves in lattice systems with a finite or even infinite number of interaction bonds and vector-valued displacement fields. The corresponding advance-delay differential equations can also be written in form of nonlinear eigenvalue problems but include more than one convolution kernel and several nonlinearities, see \cite{HML16,ChH18,PV18,HM19a}. 
\par
Although the existence theory of Hamiltonian lattice waves developed quite well, very little is known about their uniqueness, parameter dependence, and dynamical stability. The only rigorous results either concern completely integrable cases (Toda chain) or are restricted to asymptotic regimes, in which the problem can be tackled as a singular perturbation of an underlying ODE regime, see \cite{FP99} and \cite{HM19b} for the limits of small and large wave speeds, respectively. Any progress in this direction would enhance our understanding of energy transport in quasilinear Hamiltonian systems with strong dispersion. Of particular interest are the lattice variants of Whitham's modulation theory and their predictions on the creation and propagation of dispersive shock waves in spatially discrete media. For instance, for FPUT chains with superquadratic force function $f$ we expect the existence of a 4-parameter family of periodic and linearly stable lattice waves, which can be modulated according to a hyperbolic system of 4 nonlinear conservation laws, see for instance \cite{FV99,DHM06,DH08,YChYK17}.  There exists strong numerical evidence for the existence of modulated traveling waves but we still lack a rigoros understanding for non-integrable cases. An important subproblem in this context is to characterize the solution set of \eqref{Eqn:EV1} for at least special kernels and the spectral properties of its linearization. 
\par
We further emphasize that  standing waves or breather solutions in lattice system of coupled oscillators can also be linked to nonlinear eigenvalue problems although the corresonding equations are often semilinar and involve discrete convolution operators. A typical example is
\begin{align*}
\si\, u_j = \al\,\bat{u_{j+1}+u_{j-1}}+f\at{u_j}
\end{align*}
and appears in the theory of discrete nonlinear Schr\"odinger equations. Solutions can be constructed by variational techniques or asymptotic methods, see for instance \cite{MKA94,Wei99,Her11b} and the overview in \cite{Kev09}, but it seems that there is no global theory concerning the uniqueness of solutions for the relevant classes of nonlinearieties (which migth be either focussing or defocussing). 
%
%
\paragraph{Further related applications}
%
%
Many aggregation models in mathematical biology also involve both convolution operators and nonlinearities. A first example is the nonlocal parabolic PDE
\begin{align*}
\partial_t \varrho = \Delta_{x} \bat{ h\at{a\ast \varrho } \varrho }
\end{align*}
which has been introduced and studied in \cite{BHW13,HO15} to model the pattern forming effect of concentration dependent diffussion mobilities. The scalar function $h$, which is supposed to be nonnegative and decreasing, describes that the tendency of biological individuals to undertake a random walk depends on the local population density $\varrho\pair{t}{x}$, where the convolution kernel reflects the corresponding interaction domain. Despite its apparent simplicity, the equation exhibits a rather intruiging dynamics even in one space dimension. Starting with random initial data one oberserves the rather rapid formation of metastable peaks, which in turn interact and annihilate each other in a coarsening dynamics on much larger time scales. Each localized peak can be viewed as a stationary state and the corresponding equation $h\at{a\ast \varrho } \varrho =c$ is equivalent to the eigenvalue problem \eqref{Eqn:EV1} via the substitution $u=a\ast \varrho$, $f\at{u}=1/h\at{u}$, and $\si=1/c$. It is desirable to  understand the complete set of metastable states and their spectral properties with respect to the linearized parabolic dynamics.
\par
A second example in one space dimension is the Wasserstein gradient flow
\begin{align}
\label{Eqn:BAM2}
\partial_t \varrho = \partial_{x} \Bat{ \varrho\,\partial_x
\bat{g\at{\varrho}-a\ast \varrho 
}
 }\,,
\end{align}
where $g$ is now a strictly increasing function, see  \cite{BFH14,Kai17} and references therein. The nonlinear eigenvalue problem \eqref{Eqn:EV1} yields again stationary states with $u=g\at{\varrho}+c=a\ast \varrho$ provided that $\si^{-1}f$ is the inverse function to $g+c$. However, there further exist more general steady states $\varrho$ which are compactly supported and satify the nonlinear integral equation  on that support only, see \cite{BFH14}.  
\par
The scalar and one-dimensional equation \eqref{Eqn:EV1} can also be viewed as a simplified toy model for other nonlinear eigenvalue problems. For instance, chimera states in the Kuramoto equation can be characterized by a complex-valued analogue to \eqref{Eqn:EV1} as explained in \cite{OMT08,Ome18}. Further examples are the nonlocal variants of the Allan-Cahn equation or systems of reaction diffusion equations. In this context, however, the nonlinearities are typically bistable instead of monotone and the equation for the relevant coherent states often involves additional derivative terms that account for a possible progagation with constant speed, see for instance \cite{FS15,BS18}. The existence and uniqueness problem is hence more challenging and has already been investigated by perturbative arguments as described in \cite{AFSS16,ST19}. Finally, many semilinear elliptic PDE can be interpreted as nonlinear eigenvalue problems, where $a$ is the order preserving solution operator of the underlying linear problem. The standard example in one space dimension is the exponential kernel  $a\at{x}=\tfrac12\exp\at{-\abs{x}}$, which can be viewed as the fundamental solution to the differential operator $-\partial_x^2 +1$. The corresponding eigenvalue problem \eqref{Eqn:EV1} can be tackled by planar ODE techniques applied to $-\si u^{\prime\prime}=f\at{u}-\si\,u$ and this provides a complete characterization of all solutions.  For general kernels $a$, however, we lack such a simplyfying reinterpretation. Notice also that the space variable $x$ in  elliptic PDEs is often confined to a certain subdomain of $\Rset^n$,  which requires to impose additional boundary conditions for $u$. 
%
%
\paragraph{Problems, discussion, and outlook}
%
A complete mathematical theory of nonlinear eigenvalue problems should address the existence, uniqueness, and parameter dependence of  solutions but also the spectral properties of its linearization as those naturally appear in the corresponding continuation and bifurcation analysis.  All available rigorous and heuristic results indicate that the mathematical theory strongly depends on the structural properties of the convolution kernel and the nonlinearities. 
\par
In this paper we restrict our considerations to one space dimension, unimodal kernels $a$, and a superlinear function $f$. This setting is relevant for both Hamiltonian lattice waves and biological aggregation models and still related to many unsolved mathematical problems concerning the uniqueness and dynamical stability of solutions.   The case of sublinear functions $f$ should be simpler because both existence and uniquness results can be inferred from the abtract theory in \cite{Rab71}. The case of non-monotone functions $f$ is of course much more involved and will not be adressed  here. Furthermore, we do not discuss general convolution kernels (being nonnegative or not) or the case of vector-valued eigenfunctions.
\par
The existence of periodic or solitary solutions to \eqref{Eqn:EV1} can be established by various methods and we already mentioned variational and asymptotic techniques for special classes of $a$ and $f$. For sufficiently nice kernels $a$, the existence of periodic solutions can also be deduced from the classical Crandall-Rabinowitz theory \cite{CR71}, which starts with a periodic solution to the linear equation 
\begin{align*}
\si_0\, u_0= f^\prime\at{0}\, a\ast u_0
\end{align*}
and provides a global bifurcation result under natural nondegenericity conditions. In one space dimension, all existence conditions can easily be checked by means of Fourier arguments. We are, however, not aware of any related uniqueness result that covers superlinear functions $f$ and is not based on local bifurcation analysis. 
\par
Motivated by numerical simulations and the existing results for special cases we conjecture --- both for any fixed periodicty length $L<\infty$ and the solitary limit $L=\infty$ --- the existence of a unique family of solutions $\pair{\si}{u}$ that can be parametrized by the eigenvalue $\si$ and involves eigenfunctions $u$ that are nonnegative, even, and unimodal (see below for a precise definition). This statement does not exclude the existence of further eigenfunctions that do not meet the shape constraint and can be interpreted as a nonlinear version of the famous Krein-Rutmann theorem (or its Perron-Frobenius analogue for matrices). There already exist some nonlinear variants of these theorems, see for instance \cite{Mah07,Ara18}, but these are restricted to $1$-homogeneous nonlinearities which are necessarily linear in the scalar case considered here.
\par
Moreover, except for the exponential kernel it is not clear how to characterize all solutions $\pair{\si}{u}$ to the linearized eigenvalue problem 
\begin{align}
\label{Eqn:EVLin}
\si_*\, u + \si\, u_* = a\ast\bat{f^\prime\at{u_*} u}\,,
\end{align}
where $\pair{\si_*}{u_*}$ denotes a fixed solution to the nonlinear equation. Preliminary numerical simulations indicate that there should be some analogue to the Sturm-Liouville theory for second order ODEs but it seems that there is no rigorous result in this direction that applies to arbitrary kernels $a$.
\par
With the present paper we wish to contribute to the general theory of the eigenvalue problem \eqref{Eqn:EV1} with superlinear $f$. At the moment we are only able to prove our conjecture for piecewise linear functions $f$ but hope to investigate more general nonlinearities in a forthcoming study. The next natural step is to allow $f^\prime$ to exhibit more than one jump discontinuity but even the trilinear case is, as far as we see, considerably more involved than the bilinear situation studied here. We also postpone the investigation of the linearized eigenvalue problem \eqref{Eqn:EVLin} or related equations to future work.
%
%
\paragraph{Assumptions and results}
%
%
As illustrated Figure \ref{Fig:Nonl}, we consider functions $f$ that are piecewise linear on the interval $\cointerval{0}{\infty}$. More precisely, we set
\begin{align}
\label{Eqn:Nonl}
f\at{r}:=\left\{
\begin{array}{lccl}
\zeta \, r&& \text{for}& r\in\ccinterval{0}{\theta}\,,\\
\at{\zeta+\eta}\, \at{r-\theta}+\zeta\,\theta&& \text{for}&r\in\cointerval{\theta}{\infty}\,,\\
\end{array}
\right.
\end{align}
and assume that the three free parameters $\zeta$, $\theta$, $\eta$ comply with the superlinearity condition
\begin{align}
\label{Eqn:Prms}
\zeta\geq0\,,\qquad \theta>0\,,\qquad \eta>0\,.  
\end{align}
\begin{figure}[t!]%
\centering{%
\includegraphics[width=0.8\textwidth]{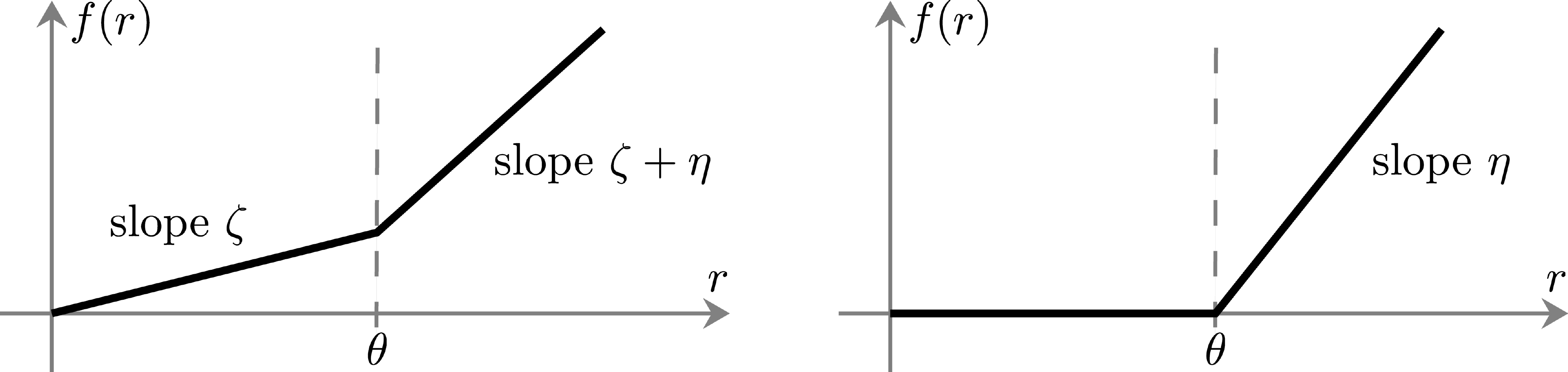}%
}%
\caption{%
The bilinear function $f$ from \eqref{Eqn:Nonl} as studied in this paper in the superlinear parameter regime \eqref{Eqn:Prms}. The general case (\emph{left panel}) can be traced back to the special case $\zeta=0$ (\emph{right panel}) according to Proposition~\ref{Prop:Transformation}.
}%
\label{Fig:Nonl}%
\end{figure}%
It is also essential for our uniqueness result that the kernel $a:\Rset\to\Rset$ is integrable and belongs to the cone
\begin{align*}
\calU = \big\{u\in\fspaceL^2\at\Rset
\;:\; u\at{x}=u\at{-x}\;\;\text{and}\;\; u\at{x}\geq u\at{\tilde{x}}\geq 0\;\;\text{for almost all}\;\; 0<x<\tilde{x}\big\}\,,
\end{align*}
which contains all bounded $\fspaceL^2$-functions that are  even, nonnegative, and unimodal. We suppose further that $a$ is differentiable and strictly unimodal according to
\begin{align}
\label{Eqn:Strict}
a^\prime\at{x}<0\quad \text{for}\quad x>0\,,
\end{align}
because this simplifies the presentation. However, we always discuss how more general results can be obtained by relaxing the precise formulation of theorems or enhancing the technical arguments in their proofs.
\begin{assumption}[properties of the convolution kernel]
\label{Ass:Kernel}%
The convolution kernel $a$ belongs to $\calU$ and $ \fspaceL^1\at{\Rset}$, is normalized by
\begin{align}
\notag
\int\limits_{-\infty}^{+\infty}a\at{x}\dint{x}=1\,,
\end{align}
and satisfies additionally \eqref{Eqn:Strict}.
\end{assumption}
A prototypical example for Assumption \ref{Ass:Kernel} is the Gaussian
\begin{align}
\label{Eqn:Kernel1}
a_1\at{x}:=\exp\at{-x^2}/\sqrt\pi\,
\end{align}
while the unimodal tent map
\begin{align}
\label{Eqn:Kernel2}
a_2\at{x}:=\max\{0,1-\abs{x}\}
\end{align}
and the indicator function
\begin{align}
\label{Eqn:Kernel3}
a_3\at{x}:=\left\{\begin{array}{ccl}1&& \text{for $\abs{x}\leq \tfrac12$}\\0&&\text{else} \end{array}\right.
\end{align}
are compactly supported and violate \eqref{Eqn:Strict}. Both kernels are nonetheless interesting since they are naturally related to traveling waves in FPUT chains and other spatially discrete conservation laws, see \eqref{Eqn:FPU} and \eqref{Eqn:DCL}.  For the kernel \eqref{Eqn:Kernel2}, the eigenvalue problem \eqref{Eqn:EV1} with bilinear function \eqref{Eqn:Nonl} has already been solved in \cite{TV14} by
a combination of Fourier and numerical methods, see the more detailed comments at the end of \S\ref{sect:non}.
\bigpar 
Our findings for the nonlinearity \eqref{Eqn:Nonl} and kernels as in Assumption \ref{Ass:Kernel} can be summarized as follows. %
\begin{figure}[t!]%
\centering{%
\includegraphics[width=0.98\textwidth]{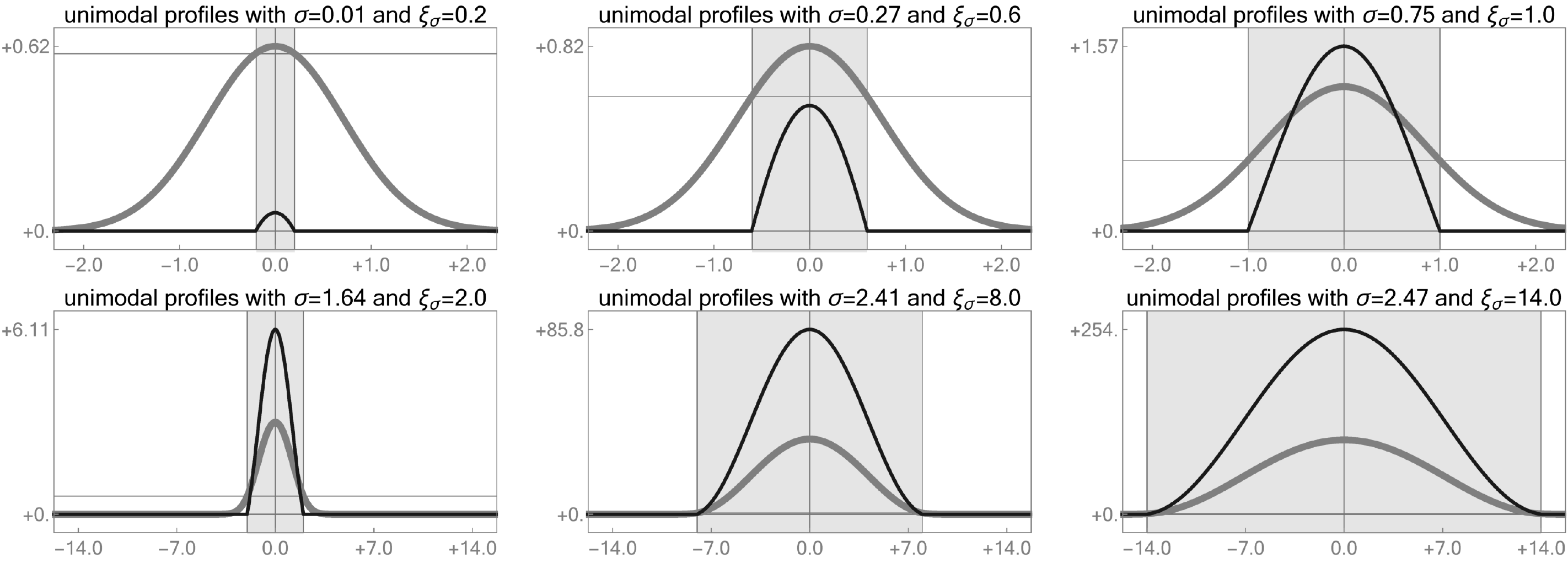}%
}%
\caption{%
Numerical profiles $u_\si$ (gray) and $f\at{u_\si}$ (black) for several values of $\si$,  the regular kernel \eqref{Eqn:Kernel1},
and nonlinearity parameters $\zeta=0$, $\theta=0.6$, $\eta=2.5$. The gray box represents the interval $I_{\xi_\si}$ which is defined by $u_\si\at{\pm\xi_\si}=\theta$. Notice the different plot regions for top and bottom row.
}%
\label{Fig.Ex1U}%
\bigskip%
\centering{%
\includegraphics[width=0.98\textwidth]{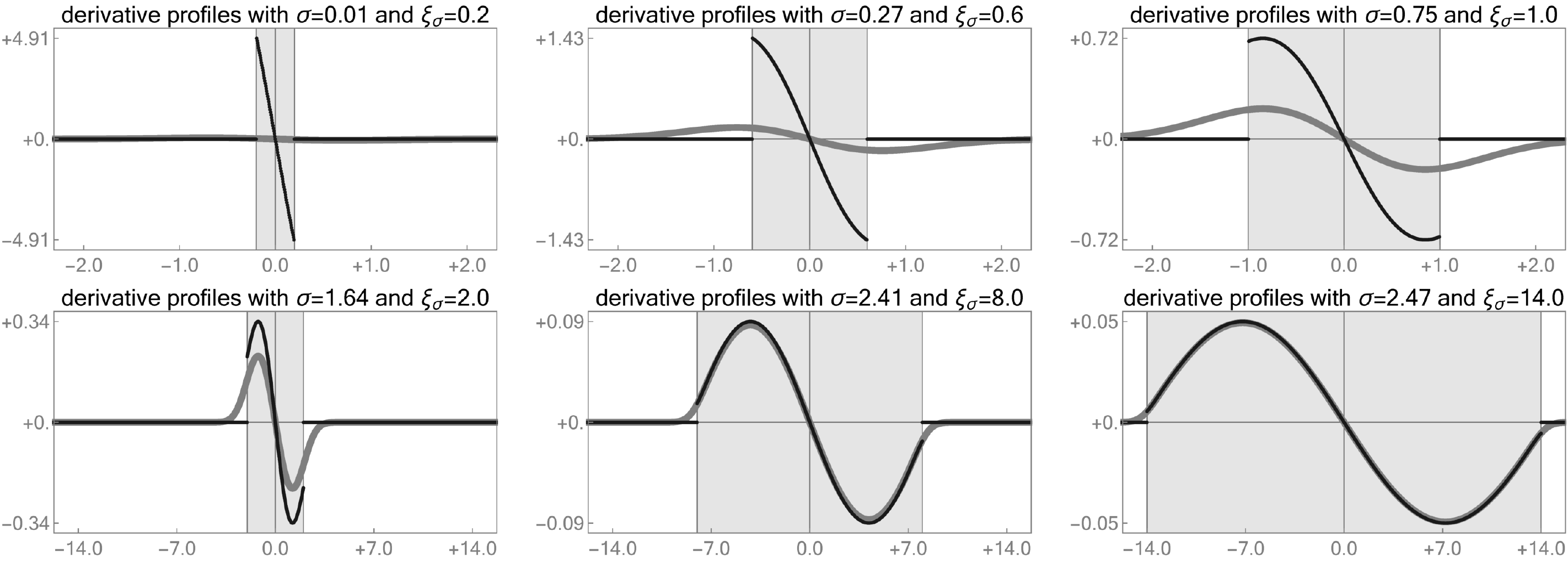}%
}%
\caption{%
Effective derivative profile $v_\si=\chi_{\xi_\si}\cdot u_\si^\prime$ (gray) as well as $a\ast v_\si=\eta^{-1}\,\si\, u_\si^\prime$ (black) for the nonlinear eigenfunctions from Figure \ref{Fig.Ex1U}. The Krein-Rutman eigenfunction provided by Proposition \ref{Prop:KR} for $\xi_\si$ is proportional to $v_\si$.
}%
\label{Fig.Ex1N}%
\end{figure} %
\begin{mresult}[existence and uniqueness under shape constraint]\quad %
\begin{enumerate}
\item
There exists a unique one-parameter family of nonlinear eigenfunction $u_\si\in\calU$ parametrized by $\zeta<\si<\zeta+\eta$.
\item
Formal asymptotic expansions suggest the following limit behavior:
\begin{enumerate}
\item%
The eigenfunctions $u_\si$ converge as $\si\searrow\zeta$ to a nontrivial limit profile $u_\zeta\in\calU$ with 
\begin{align*}
\qquad u_\zeta\at{x}\leq \theta
\quad\text{for all}\quad x\,,\qquad u_\zeta\at{0}=\theta\,,
\end{align*}
but the details are different for $\zeta=0$ and $\zeta>0$.
\item
There is no limit $u_{\zeta+\eta}$ due to $\lim_{\si\nearrow \zeta+\eta}\norm{u_\si}_2=\infty$.
\end{enumerate}
\end{enumerate}
\end{mresult}
The rigorous part of our main result is proven in Proposition \ref{Prop:ExUni} while the nonrigorous asymptotic analysis is presented in the appendix.  We also
refer to  Figures \ref{Fig.Ex1U}, \ref{Fig.Ex1N}, and \ref{Fig.Ex3} for numerical simulations with $\zeta=0$ and emphasize that our results do not exclude the existence of further eigenfunctions outside the cone $\calU$. In addition to the trivial nonuniqueness due to the shift invariance of \eqref{Eqn:EV1}, we expect the existence of families of periodic solutions. Moreover, for rapidly decaying kernels one might think about multi-bump solutions, which can be imagined as superposition of finitely many and well separated single-bump solutions having unimodal profile.
\begin{figure}[t!]%
\centering{%
\includegraphics[width=0.98\textwidth]{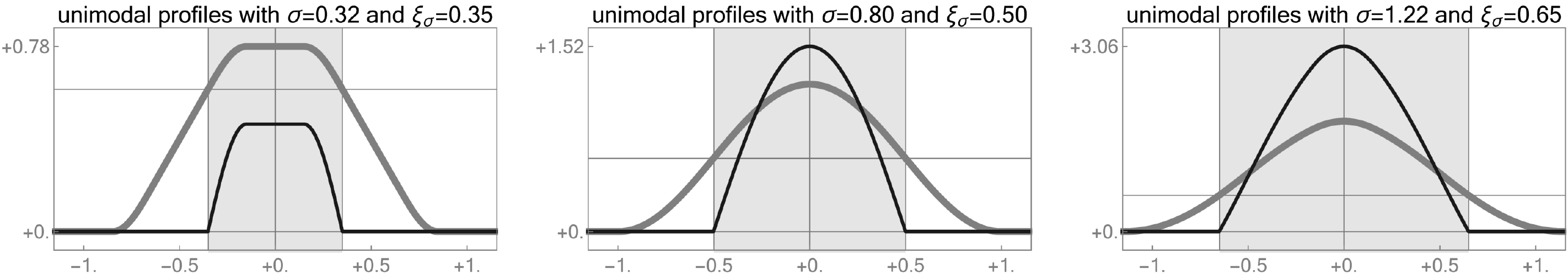}%
\\%
\includegraphics[width=0.98\textwidth]{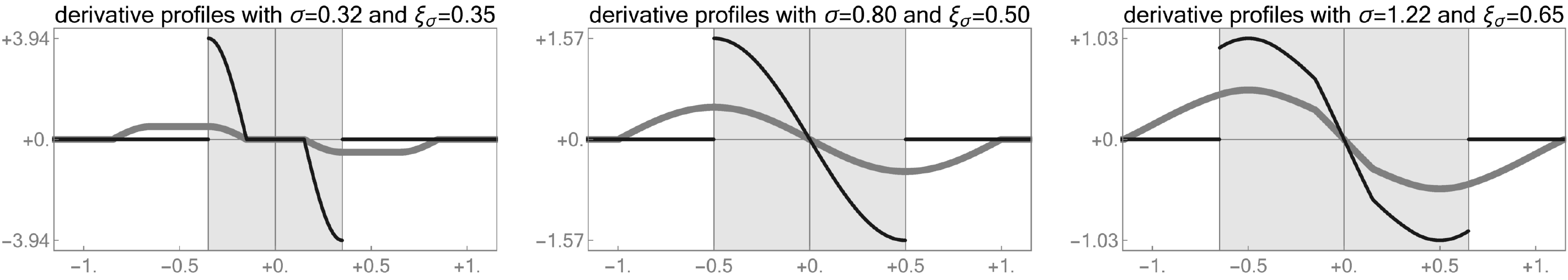}%
}%
\caption{%
Numerical simulations for the degenerate kernel 
\eqref{Fig.Ex3} and nonlinearity parameters as in Figure \ref{Fig.Ex1U}. The statement of some results must be modified in this case since the strict unimodality condition \eqref{Eqn:Strict} is not satisfied. 
}%
\label{Fig.Ex3}%
\end{figure}%
%
%
%
\paragraph{Proof strategy and organization of paper}
%
The central ideas for the proof of our main results can be sketched as follows. Assuming smoothness of $u$ we deduce $u^\prime\in\calN$, where
\begin{align*}
\calN:=\Big\{ v\in\fspaceL^2\at\Rset \,:\, v\at{x}=-v\at{-x}\;\;\text{and}\;\; v\at{x}\leq 0\;\;\text{for almost all}\;\; x>0 \Big\}\,,
\end{align*}
contains all odd functions that attain nonpositive values on $\oointerval{0}{\infty}$ (usually, elements of $-\calN$ and $\calN$ are called positive and negative, respectively). Moreover, differentiating \eqref{Eqn:EV1} we find
\begin{align}
\label{Eqn:NesCond1}
\si \,u^\prime = a \ast\bat{ \zeta u^\prime + \at{\eta-\zeta}\,\chi_\xi\cdot\, u^\prime} 
\end{align}
with
\begin{align}
\notag
 I_\xi := \ccinterval{-\xi}{\xi}\,,\qquad
 \chi_\xi\at{x}:=\left\{
\begin{array}{ccl}
1&&\text{for $x\in I_\xi$}\,,
\\%
0&&\text{else}\,,
\end{array}\right.\qquad
\end{align}
where $\xi$ is the value at which $u$ attains the value $\theta$. This reads
\begin{align}
\label{Eqn:NesCond2}
\theta = u\at{\xi}=\int\limits_{\xi}^\infty u^\prime\at{x}\dint{x}\,.
\end{align}
In the special case of $\zeta=0$, the necessary condition \eqref{Eqn:NesCond1} reads
\begin{align}
\label{Eqn:NesCond3a}
\la\, u^\prime = a \ast v
\end{align}
and involves the effective derivative profile
\begin{align}
\label{Eqn:NesCond3c}
v=\chi_\xi \cdot u^\prime
\end{align}
as well as
\begin{align}
\label{Eqn:NesCond3d}
\la = \si/\eta\,.
\end{align}
Formula \eqref{Eqn:NesCond3a} implies the linear eigenvalue problem
\begin{align}
\label{Eqn:NesCond3b}
 \la \,v = \calA_\xi v\,,
\end{align}
where
\begin{align}
\label{Eqn:DefOpA}
\calA_\xi v := \chi_\xi \cdot \,\bat{a\ast \at{\chi_\xi \cdot v}}
\end{align}
is a compact and symmetric operator, which exhibits nice invariance properties. In particular, adapting the strong version of the Krein-Rutmann theorem we can show that the eigenspace to the largest eigenvalue of $\calA_\xi$ is simple and generated by an element of $\calN$, see Proposition \ref{Prop:KR}.  This observation yields in combination with \eqref{Eqn:NesCond2} the existence and uniqueness of a family of solutions to \eqref{Eqn:EV1} which is naturally parametrized by $\xi$ and can be computed efficiently. Moreover, a closer inspection of the linear eigenvalue problem \eqref{Eqn:NesCond3b} as carried out in Proposition \ref{Prop:KRMon} reveals that the mapping $\xi\mapsto\la\mapsto \si$ is invertible so that $\xi$ can be replaced by $\si$. This change of parameters is the nonlinear part of our approach.
\par 
Another observation extends the existence and uniqueness result to the general case  $\zeta>0$. More precisely, we identify in Proposition \ref{Prop:Transformation} a modified kernel $\tilde{a}\in\calU$ with $\int_\Rset \tilde{a}\at{x}\dint{x}=1$ such that the implication
\begin{align}
\label{Eqn:LinTrafo}
 w-\mu\, a\ast w = a\ast g \qquad \Longleftrightarrow \qquad w=\at{1-\mu}^{-1}\,\tilde{a}\ast g
 \end{align}
holds with $\mu=\si^{-1}\zeta$. This enables us to transform the general case $\zeta>0$ into the special case $\zeta=0$ by replacing the kernel $a$ with $\tilde{a}$ and changing $\si$. A similar argument applies on the level of the derivatives because \eqref{Eqn:NesCond1} can be written as $\tilde{\la}\,u^\prime = \tilde{a}\ast\at{ \chi_\xi\cdot u^\prime}$. 
%
%
\section{Linear eigenvalue problem for the effective derivative profile}\label{sect:lin}
%
%
In this section we study the eigenvalue problem of the operator $\calA_\xi$ from \eqref{Eqn:DefOpA} in the space 
$\fspaceL^2_\odd\at\Rset$ which contains all functions that are square-integrable and odd. Notice that the representation formulas
 \begin{align}
\label{Eqn:OpA2}
\at{\calA_\xi v}\at{x}= \int\limits_{-\xi}^{+\xi}a\at{x-y}v\at{y}\dint{y}=\int\limits_0^{\xi}\bat{a\at{x-y}-a\at{x+y}}v\at{y}\dint{y}\qquad\text{for}\quad \abs{x}\leq \xi
\end{align}
and 
\begin{align*}
\at{\calA_\xi v}\at{x}= 0\qquad\text{for}\quad \abs{x}>\xi
\end{align*}
hold for any $v\in\fspaceL^2_\odd\at\Rset$.
\begin{lemma}[properties of the modified convolution operator]
\label{Lem:OpA}
For any $\xi\in\oointerval{0}{\infty}$, the operator $\calA_\xi$, maps $\fspaceL^2\at\Rset$ into itself, respects  the even-odd parity, and is both compact and self-adjoint. Moreover, the convex cones $\calN$ and $\calU$ are invariant under the action of $\calA_\xi$.
\end{lemma}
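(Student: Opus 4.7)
The plan is to verify the four assertions in turn, exploiting the representation of $\calA_\xi$ as an integral operator with the symmetric kernel
\begin{align*}
K\pair{x}{y}=\chi_\xi\at{x}\,a\at{x-y}\,\chi_\xi\at{y}\,.
\end{align*}
For the mapping, compactness, self-adjointness, and parity assertions I proceed as follows. By Assumption \ref{Ass:Kernel} the kernel $a$ is continuous (being differentiable) and lies in $\fspaceL^1\at\Rset$; unimodality together with the decay at infinity implied by integrability forces $a\in\fspaceL^\infty\at\Rset$ as well. Hence $K$ is a bounded function supported in $I_\xi\times I_\xi$, so $K\in\fspaceL^2\at{\Rset^2}$ and $\calA_\xi$ is Hilbert-Schmidt on $\fspaceL^2\at\Rset$, in particular bounded and compact. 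Self-adjointness follows from $K\pair{x}{y}=K\pair{y}{x}$, a direct consequence of the evenness of $a$. Parity invariance is standard: convolution with an even function preserves the parity of its argument, and multiplication by the even factor $\chi_\xi$ does the same.

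For the invariance of $\calN$ I take $v\in\calN$ and invoke the second formula in \eqref{Eqn:OpA2}. For $0<x\leq\xi$ and $0<y\leq\xi$ the elementary inequality $\abs{x-y}\leq x+y$ combined with the evenness and unimodality of $a$ yields $a\at{x-y}\geq a\at{x+y}$, so the bracket is nonnegative; since $v\at{y}\leq 0$ on $\oointerval{0}{\xi}$, the whole integrand is nonpositive and hence $\at{\calA_\xi v}\at{x}\leq 0$. Combined with the oddness of $\calA_\xi v$ and its vanishing outside $I_\xi$, this places $\calA_\xi v$ in $\calN$.

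The main obstacle is the invariance of $\calU$. Evenness and nonnegativity of $\calA_\xi u$ are immediate from the first formula in \eqref{Eqn:OpA2}; the delicate point is unimodality. My plan is a layer-cake reduction: for $u\in\calU$ the super-level sets $\{u>t\}$ are symmetric intervals, so the layer-cake formula yields $u=\int_0^\infty \chi_{\ccinterval{-s\at{t}}{s\at{t}}}\dint{t}$ for a suitable nonnegative and nonincreasing function $t\mapsto s\at{t}$. By linearity and Fubini it then suffices to show that $\calA_\xi \chi_{\ccinterval{-s}{s}}$ is symmetric unimodal for every $s>0$, and with $\hat s:=\min\{s,\xi\}$ a direct computation gives
\begin{align*}
\bat{\calA_\xi \chi_{\ccinterval{-s}{s}}}\at{x}=\chi_\xi\at{x}\int\limits_{x-\hat s}^{x+\hat s}a\at{z}\dint{z}\,.
\end{align*}
Nonincrease of the right-hand side on $\cointerval{0}{\infty}$ is then verified by distinguishing the cases $x\geq \hat s$ and $x<\hat s$ within $\ccinterval{0}{\xi}$ and using in each case that $a$ is even and nonincreasing on $\cointerval{0}{\infty}$, while for $x>\xi$ the function vanishes due to the outer $\chi_\xi$. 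Superposing these symmetric unimodal building blocks over $t$ preserves each of evenness, nonnegativity, and unimodality, so $\calA_\xi u\in\calU$.
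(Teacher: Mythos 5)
Your proof is correct, and for the mapping, compactness, self-adjointness, and parity claims as well as for the invariance of $\calN$ it amounts to the paper's own argument: the truncated kernel $\chi_\xi\at{x}\,a\at{x-y}\,\chi_\xi\at{y}$ is one of the ``standard arguments'' alluded to, and the sign of $a\at{x-y}-a\at{x+y}$ in \eqref{Eqn:OpA2} is exactly the paper's reason for $\calA_\xi\calN\subset\calN$. The genuine difference lies in the invariance of $\calU$. The paper settles this by citing ``the analogous formula in $\fspaceL^2_\even\at\Rset$'', whose kernel is $a\at{x-y}+a\at{x+y}$; that formula yields evenness and nonnegativity at once, but it does not yield unimodality by a pointwise sign argument, because $x\mapsto a\at{x-y}+a\at{x+y}$ need not be nonincreasing on $\oointerval{0}{\infty}$ (for the Gaussian \eqref{Eqn:Kernel1} and $y$ moderately large it is bimodal in $x$). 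Your layer-cake reduction to indicators of symmetric intervals --- the classical one-dimensional Wintner-type argument that convolution preserves even unimodality, here adapted to the cut-off --- supplies precisely this missing monotonicity step and gives a complete elementary proof; an equivalent route closer to the paper's wording is to write the derivative of $a\ast\at{\chi_\xi\cdot u}$ as an integral of $a\at{x-y}-a\at{x+y}$ against the distributional derivative of $\chi_\xi\cdot u$, which is an odd measure, nonpositive on $\oointerval{0}{\infty}$, so that the odd-parity argument applies again. What your route buys is that it avoids any differentiability or BV bookkeeping for $u$ and works verbatim for kernels that are merely in $\calU$. Two harmless remarks: the boundedness of $a$ follows from continuity at the origin together with unimodality (decay at infinity is beside the point), and it is not even needed, since $a\in\fspaceL^2\at\Rset$ already makes the truncated kernel square-integrable on $I_\xi\times I_\xi$; and since membership in $\calU$ is an almost-everywhere property, the super-level sets of $u$ are symmetric intervals only up to null sets, which does not affect the layer-cake identity or the conclusion.
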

\begin{proof}
The first assertions can be derived from \eqref{Eqn:DefOpA} using standard arguments for convolution and multiplication operators. Moreover, since $a$ is even and unimodal,
the invariance of $\calN$ and $\calU$ follows from \eqref{Eqn:OpA2} and the analogous formula in $\fspaceL^2_\even\at\Rset$.
\end{proof}
%
%
\paragraph{Linear uniqueness result}
%
As first main auxiliary result we prove that the largest eigenvalue of $\calA_\xi$ is simple and spanned by a unique normalized eigenfunction in $\calN$. Such uniqueness results are usually inferred from the strong version of the Krein-Rutmann theorem but the classical formulation requires an order preserving operator that maps the cone of positive (or negative) elements in its interiors, see for instance \cite[appendix to chapter IIX]{DLvol3}. In our case, however, the cone $\calN$ has no inner points and the compact support of $\chi_\xi$ implies for every $v\in\calN$ that $\calA_\xi v$ belongs to the topological $\fspaceL^2$-boundary of $\calN$. 
\par
The assertions of the Krein-Rutmann Theorem hold notwithstanding. The crucial idea is that any eigenfunction that corresponds to the largest eigenvalue of $\calA_\xi$ must belong to the smaller cone
\begin{align*}
\widetilde{\calN}_\xi:=\Big\{ v\in \calN\cap \fspace{C}^1\at{I_\xi}\;:\;\,v^\prime\at{0}<0\;\;\text{and}\;\; v\at{x}<0\;\;\text{for}\;\;0<x\leq \xi\;\;\text{but}\;\;v\at{x}=0\;\;\text{for}\;\;x>\xi \Big\}\,,
\end{align*}
and this observation finally enables us to adapt classical arguments. A similar strategy has been used in \cite{BFH14}, which combines a variant of the operator $\calA_\xi$ with taylor-made nonlinear fixed point arguments to prove the existence (but not the uniqueness) of unimodal and compactly supported solutions $\varrho$ to the stationary variant of \eqref{Eqn:BAM2}. The involved linear Krein-Rutmann argument is that $v=\varrho^\prime$ satisfies the linearized equation $g^\prime\at\varrho\, v = a\ast v$ on the compact support of $\varrho$.
\begin{proposition}[variant of the Krein-Rutmann theorem]
\label{Prop:KR}
For any $0<\xi<\infty$, the largest eigenvalue $\la_\xi$ of $\calA_\xi$ in $\fspaceL_\odd^2\at\Rset$ is simple and the corresponding eigenspace is spanned by a unique eigenfunction $v_\xi\in\widetilde{\calN}_\xi$ with $\norm{v_\xi}_2=1$. 
\end{proposition}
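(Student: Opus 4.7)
The key algebraic fact, deduced from the formula \eqref{Eqn:OpA2}, is that for any $v\in\fspaceL^2_\odd(\Rset)$ one has
\[
\langle\calA_\xi v,v\rangle=2\int_0^\xi\!\!\int_0^\xi K(x,y)\,v(x)v(y)\dint{x}\dint{y},\qquad K(x,y):=a(x-y)-a(x+y),
\]
and the strict unimodality \eqref{Eqn:Strict} combined with evenness of $a$ forces $K(x,y)>0$ on $(0,\xi)^2$, since $|x-y|<x+y$ there and $a$ is strictly decreasing on $[0,\infty)$. I combine this positivity with the variational characterization of $\la_\xi$.

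Since $\calA_\xi$ is compact and self-adjoint on $\fspaceL^2_\odd(\Rset)$ by Lemma~\ref{Lem:OpA}, the largest eigenvalue is $\la_\xi=\sup\{\langle\calA_\xi v,v\rangle:\|v\|_2=1\}$ and is attained. Given any candidate $v$ I consider the symmetrization $\tilde v(x):=-\sgn(x)\,|v(|x|)|$, which lies in $\calN$, has $\|\tilde v\|_2=\|v\|_2$, and by positivity of $K$ satisfies
\[
\langle\calA_\xi\tilde v,\tilde v\rangle\geq\bigl|\langle\calA_\xi v,v\rangle\bigr|,
\]
with equality in the first inequality only if $v$ is of one sign almost everywhere on $(0,\xi)$. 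Consequently a maximizer may be taken in $\calN$, the eigenvalue $\la_\xi$ is strictly positive (any nonzero $v\in\calN$ has positive Rayleigh quotient by positivity of $K$), and, crucially, every nonzero element of the eigenspace $E_{\la_\xi}$ must be sign-definite on $(0,\xi)$.

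Next I upgrade such an eigenfunction $v_\xi\in\calN\cap E_{\la_\xi}$ to $\widetilde{\calN}_\xi$. Any $v\in E_{\la_\xi}$ is supported in $I_\xi$ because $\calA_\xi v=\chi_\xi\cdot(\cdots)$. For $|x|\leq\xi$, the identity $\la_\xi v_\xi(x)=\int_0^\xi K(x,y)\,v_\xi(y)\dint{y}$ together with $K>0$ on $(0,\xi)^2$ and $v_\xi\not\equiv 0$ on $(0,\xi)$ yields the strict negativity $v_\xi(x)<0$ for $0<x\leq\xi$. Differentiability of $a$ permits me to differentiate under the integral, giving $v_\xi\in\fspace{C}^1(I_\xi)$ and, using oddness of $a'$,
\[
\la_\xi\,v_\xi'(0)=\int_0^\xi\bigl(-2a'(y)\bigr)v_\xi(y)\dint{y}<0
\]
because $a'(y)<0$ and $v_\xi(y)<0$ on $(0,\xi)$. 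Hence $v_\xi\in\widetilde{\calN}_\xi$.

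For simplicity, suppose $w\in E_{\la_\xi}$ is orthogonal to $v_\xi$ and nonzero. By the symmetrization argument $w$ is sign-definite on $(0,\xi)$, so $\pm w\in\calN$, and the previous paragraph then places $\pm w\in\widetilde{\calN}_\xi$. In particular $w(x)\,v_\xi(x)$ keeps a strict sign on $(0,\xi]$, whence
\[
\langle w,v_\xi\rangle=2\int_0^\xi w(x)\,v_\xi(x)\dint{x}\neq 0,
\]
contradicting orthogonality. Thus $\dim E_{\la_\xi}=1$ and normalization fixes $v_\xi$ uniquely in $\widetilde{\calN}_\xi$. The main obstacle is the strict step in the symmetrization inequality: the implication \emph{equality forces sign-definiteness} rests squarely on $K>0$ on $(0,\xi)^2$, which is exactly what strict unimodality \eqref{Eqn:Strict} supplies; for the boundary kernels \eqref{Eqn:Kernel2} or \eqref{Eqn:Kernel3} the argument would require refinement, consistent with the paper's earlier remarks on those degenerate cases.
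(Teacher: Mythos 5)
Your proof is correct, but it follows a genuinely different route than the paper in two of its three steps. For the existence of a sign-definite maximizer you use the absolute-value symmetrization $\tilde v\at{x}=-\sgn\at{x}\,\abs{v\at{x}}$ together with an equality analysis resting on the strict positivity of $K\at{x,y}=a\at{x-y}-a\at{x+y}$ on $\oointerval{0}{\xi}^2$ (the paper's condition \eqref{Prop:KR.PEqn0}); the paper instead splits an arbitrary maximizer as $v=v_--v_+$ with disjointly supported $v_\pm\in\calN$ and exploits superadditivity of $\calF_\xi$ plus homogeneity to conclude that both normalized pieces are maximizers. The paper's splitting step uses only nonnegativity of the kernel, with strictness entering solely in the regularity and uniqueness steps, which is why its modular structure transfers more easily to the degenerate kernels \eqref{Eqn:Kernel2} and \eqref{Eqn:Kernel3} discussed at the end of \S\ref{sect:lin} --- precisely the caveat you acknowledge; your version front-loads the strict positivity, and in return you get more: every element of the top eigenspace is sign-definite. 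The regularity upgrade to $\widetilde{\calN}_\xi$ is essentially identical in both arguments. For simplicity, your observation that two sign-definite (after the upgrade, strictly signed) eigenfunctions cannot be $\fspaceL^2$-orthogonal is shorter and arguably cleaner than the paper's two-part argument, which first proves uniqueness within $\calN$ by the cone-sliding comparison $v_1-s\,v_2$ and then excludes mixed-sign eigenfunctions via the splitting. One small imprecision: the claim that \emph{any} nonzero $v\in\calN$ has positive Rayleigh quotient is false for $v$ vanishing a.e.\ on $I_\xi$; to get $\la_\xi>0$ it suffices to test with some $v\in\calN$ supported in $I_\xi$ and not vanishing a.e.\ on $\oointerval{0}{\xi}$ (the paper's formulation has the same slight slip), and in your eigenspace analysis the eigenfunctions automatically do not vanish identically on $\oointerval{0}{\xi}$ once $\la_\xi>0$ is known, so nothing else breaks.
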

\begin{proof} 
\emph{\ul{Variational setting}}\,: %
The operator $\calA_\xi$ admits only real eigenvalues thanks to Lemma \ref{Lem:OpA}. It is also the G\^ ateaux derivative of the functional
\begin{align*}
\calF_\xi\at{v}:=\tfrac12\bskp{v}{\calA_\xi  v}=
\tfrac12\bskp{\chi_\xi \cdot v}{a\ast \at{\chi_\xi \cdot v}}\,,
\end{align*}
which is well-defined on $\fspaceL^2_\odd\at\Rset$ since Young's inequality implies
\begin{align}
\label{Prop:KR.PEqn5}
2\babs{\calF_\xi\at{v}}\leq\bnorm{\chi_\xi\cdot v}_2\,\bnorm{a\ast \at{\chi_\xi\cdot v}}_2\leq \bnorm{a}_1\bnorm{\chi_\xi}_\infty^2\bnorm{ v}_2^2\leq \norm{v}_2^2\,.
\end{align}
Moreover, the largest eigenvalue $\la_\xi$ of $\calA_\xi$
can be characterized variationally via
\begin{align}
\label{Prop:KR.PEqn4}
\la_\xi = \max\big\{ 2\calF_\xi\at{v}\;:\;v\in\fspaceL^2_\odd\at\Rset\,,\; \norm{v}_2=1\big\}\,,
\end{align}
where any maximizer corresponds to a normalized eigenfunction to $\la_\xi$ and vice versa.
\par
\emph{\ul{Existence of eigenfunction in $\calN$}}\,: %
Any $v\in\fspaceL^2_\odd\at\Rset$ admits a unique and disjoint-support splitting 
\begin{align}
\label{Prop:KR.PEqn1}
v=v_--v_+\qquad\text{with}\qquad v_-,\,v_+\in\calN \qquad\text{and}\qquad\norm{v}_2^2=\norm{v_-}_2^2+\norm{v_+}_2^2\,,
\end{align}
and using Assumption \ref{Ass:Kernel}, Lemma \ref{Lem:OpA}, as well as \eqref{Eqn:DefOpA} we verify
\begin{align}
\label{Prop:KR.PEqn2}
\calF_\xi\at{v}=\tfrac12\bskp{v_-}{\calA_\xi v_-}+\tfrac 12 \bskp{v_+}{\calA_\xi v_+}-
\bskp{v_-}{\calA_\xi v_+}&\leq \calF_\xi\at{v_+}+\calF_\xi\at{v_-}\,.
\end{align}
Now suppose that $v\in\fspaceL^2_\odd\at\Rset$ is a  maximizer for \eqref{Prop:KR.PEqn4} with $v_-\neq0$ and $v_+\neq0$. We then have
\begin{align}
\label{Prop:KR.PEqn3}
\calF_\xi\at{v_{-}/\norm{v_-}_2}\leq \calF_\xi\at{v}\,,\qquad
\calF_\xi\at{v_{+}/\norm{v_+}_2}\leq \calF_\xi\at{v}
\end{align}
while the homogeneity of $\calF_\xi$ along with \eqref{Prop:KR.PEqn2} yields
\begin{align*}
\calF_\xi\at{v}\leq \norm{v_-}_2^2 
\calF_\xi\bat{v_{-}/\norm{v_-}_2}+\norm{v_+}_2^2 
\calF_\xi\bat{v_{+}/\norm{v_+}_2}\,.
\end{align*}
The combination of the two latter results with \eqref{Prop:KR.PEqn1} and the normalization condition $\norm{v}_2^2=1$ implies equality in both parts of \eqref{Prop:KR.PEqn3}. We conclude that $v_-$ and $v_+$ are eigenfunctions to the maximal eigenvalue $\la_\xi$ that belong to $\calN$.
\par
\emph{\ul{Regularity by refined cone analysis}}\,: %
We next show that any eigenfunction $v\in\calN$ to $\la_\xi$ belongs in fact to the smaller cone $\widetilde{\calN}_\xi$. The strict unimodality and the evenness of $a$ imply
\begin{align}
\label{Prop:KR.PEqn0}
a\at{x-y}-a\at{x+y}>0\qquad\text{for all $0<x,y<\xi$}\,,
\end{align}
so \eqref{Eqn:OpA2} guarantees $\bat{\calA_\xi v}\at{x}<0$ and hence $v\at{x}<0$ for all $0<x\leq \xi$. Moreover, we have
\begin{align*}
\at{\calA_\xi v}^\prime\at{x}= \int\limits_0^{\xi}\Bat{a^\prime\at{x-y}-a^\prime\at{x+y}}v\at{y}\dint{y}\,,\qquad
\at{\calA_\xi v}^\prime\at{0}= -2\int\limits_0^{\xi}a^\prime\at{y}v\at{y}\dint{y}<0
\end{align*}
and obtain $v\in\fspace{BC}^1\at{I_\xi}$ with $v^\prime\at{0}<0$. Here we used $\la_\xi>0$, which holds because \eqref{Eqn:OpA2} and \eqref{Prop:KR.PEqn0} provide $\calF_\xi\at{v}>0$ for any nontrivial $v\in\calN$.
\par
\emph{\ul{Uniqueness of normalized eigenfunctions}}\,: %
We first show uniqueness within $\calN$ as follows. Any two normalized eigenfunctions $v_1$ and $v_2$  to $\la_\xi$ lie in $\widetilde{\calN}_\xi$ and the properties of this cone imply the existence of a parameter $0<s<\infty$ such that $v_1-s\,v_2$ still belongs to $\calN$ but no longer to $\widetilde{\calN}_\xi$ (because of $v_1^\prime\at{0}=s\,v_2^\prime\at{0}$ or $v_1\at{x}=s\,v_2\at{x}$ for at least one $0<x\leq \xi$). This, however, means $s=1$ as well as $v_1=v_2$ because otherwise $v_1-s\,v_2$ would be nontrivial eigenfunction in $\calN$ that violates the above regularity result. Secondly, suppose there exists a nontrivial and normalized eigenfunction $v$ that belongs to neither $\calN$ or $-\calN$. Our results derived so far imply that $v_-/\norm{v_-}$ and $v_+/\norm{v_+}$ are identically maximizers of \eqref{Prop:KR.PEqn4} and hence a contradiction to the assumption on $v$. In summary, we have shown that the eigenspace to $\la_\xi$ in $\fspaceL^2_\odd\at\Rset$ is one-dimensional and spanned by a unique normalized function $v_\xi\in\widetilde{\calN}_\xi$.
\end{proof}
\begin{figure}[t!]%
\centering{%
\includegraphics[width=0.31\textwidth]{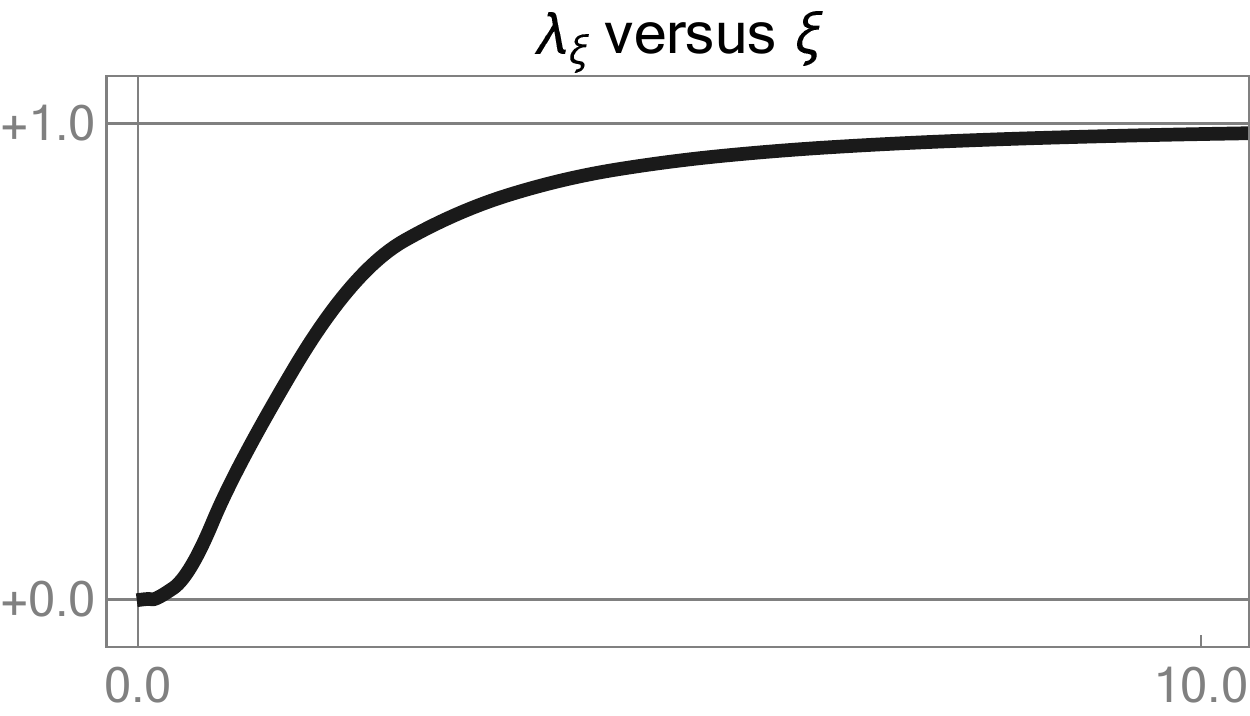}%
\hspace{0.025\textwidth}%
\includegraphics[width=0.31\textwidth]{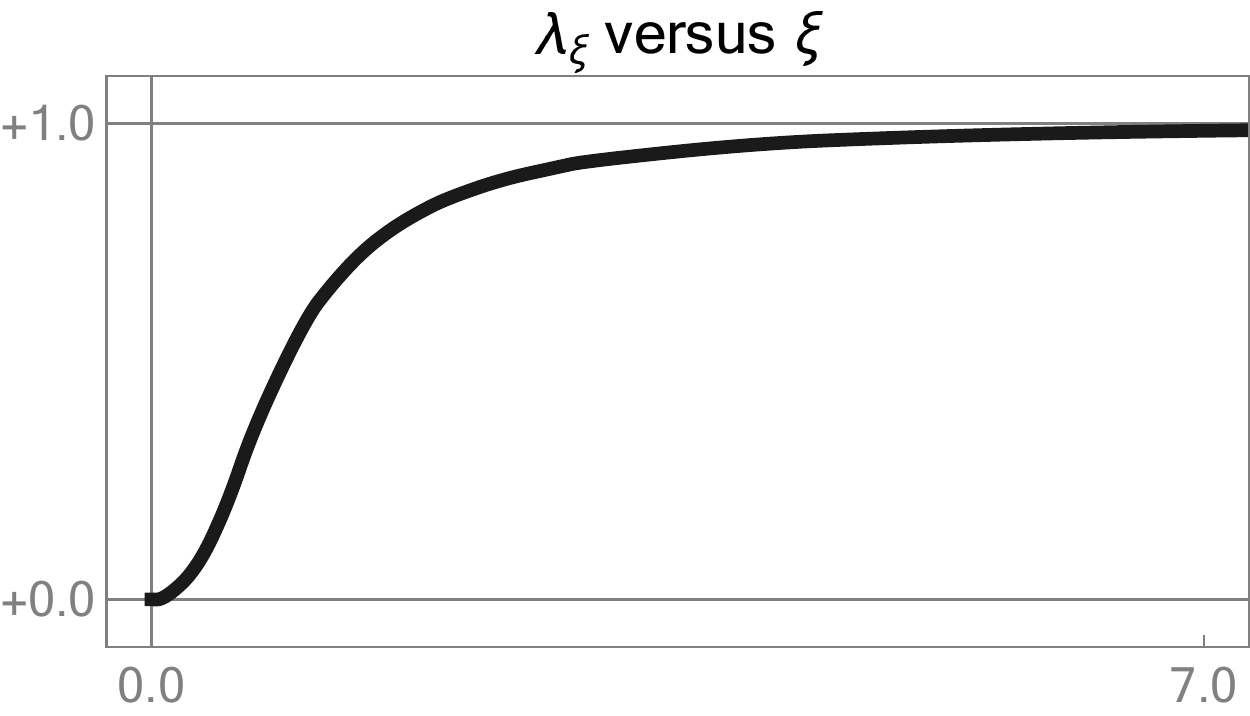}%
\hspace{0.025\textwidth}%
\includegraphics[width=0.31\textwidth]{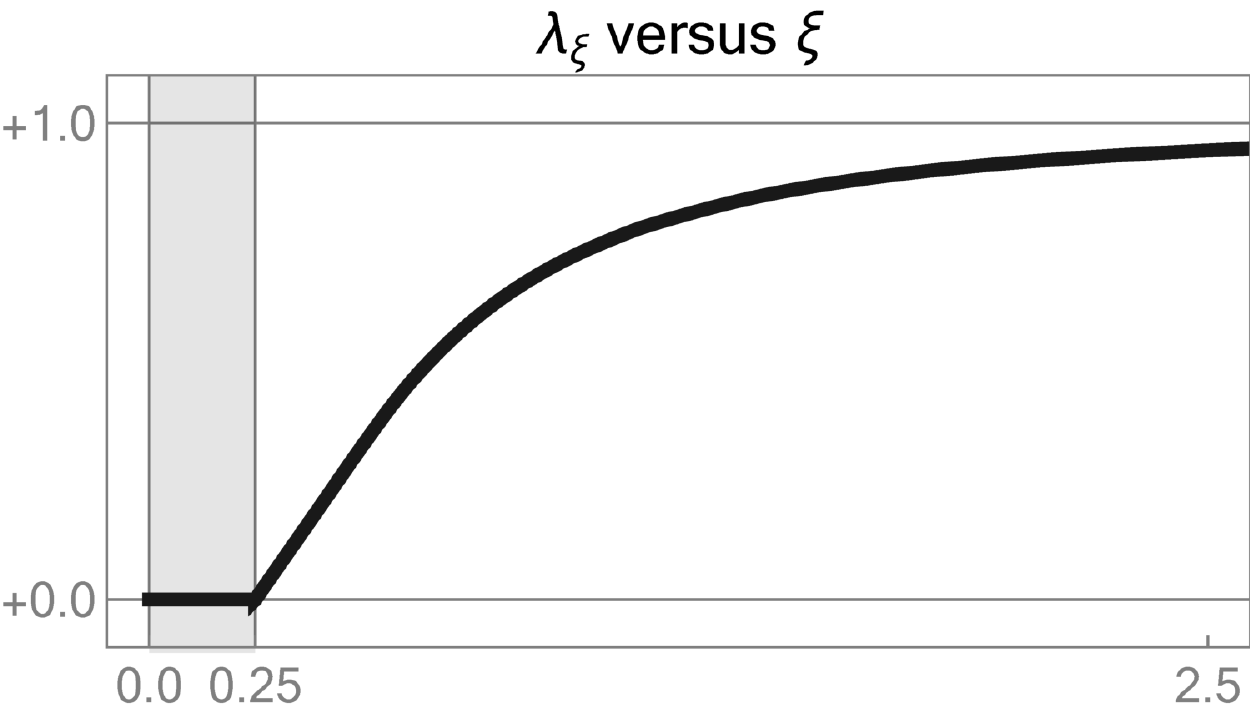}
}%
\caption{%
Numerical values of the Krein-Rutmann eigenvalue $\la_\xi$ from Proposition \ref{Prop:KRMon} for the kernels \eqref{Eqn:Kernel1} (\emph{left}), \eqref{Eqn:Kernel2} (\emph{center}), and \eqref{Eqn:Kernel3} (\emph{right}).  Notice the constant region in the last example (gray box), which stems from the plateau in the kernel.
}%
\label{Fig.KR}
\end{figure}%
%
%
%
\paragraph{Parameter dependence}
%
The second building block for our nonlinar uniqueness result in \S\ref{sect:non} are the following properties of the Krein-Rutmann quantities. Related numerical simulations are presented in Figure \ref{Fig.KR}.
\begin{proposition}[$\xi$-dependence of $\la_\xi$ and $v_\xi$]
\label{Prop:KRMon}
The map $\xi\mapsto\la_\xi$ from Proposition \ref{Prop:KR} is continuous and strictly increasing with
\begin{align*}
\lim_{\xi\searrow 0} \la_\xi=0\,,\qquad 
\lim_{\xi\nearrow \infty}\la_\xi =1
\end{align*}
Moreover, the corresponding map $ \xi\mapsto v_\xi$ is $\fspaceL^2$-continuous.
\end{proposition}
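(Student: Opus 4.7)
\emph{Plan.} My approach rests on the variational characterization
\[
\la_\xi=\max\big\{2\calF_\xi\at{v}\,:\,v\in\fspaceL^2_\odd\at\Rset,\;\norm{v}_2=1\big\}
\]
from Proposition \ref{Prop:KR} and on the observation that $\xi\mapsto\calA_\xi$ defines a norm-continuous family of Hilbert-Schmidt operators. I organize the proof into three pieces: monotonicity, continuity, and limits.

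\emph{Monotonicity.} For $\xi_1<\xi_2$ I extend $v_{\xi_1}$ by zero to obtain an admissible test function for $\la_{\xi_2}$. Since $v_{\xi_1}$ is supported in $I_{\xi_1}\subset I_{\xi_2}$, a direct computation gives $2\calF_{\xi_2}\at{v_{\xi_1}}=2\calF_{\xi_1}\at{v_{\xi_1}}=\la_{\xi_1}$, so $\la_{\xi_2}\geq\la_{\xi_1}$. If equality held, then $v_{\xi_1}$ would be a normalized maximizer for $\la_{\xi_2}$ and hence $v_{\xi_1}=\pm v_{\xi_2}$ by the simplicity statement in Proposition \ref{Prop:KR}; this is impossible because $v_{\xi_2}\in\widetilde{\calN}_{\xi_2}$ is strictly nonzero on $\oointerval{0}{\xi_2}$ while $v_{\xi_1}$ vanishes on $\oointerval{\xi_1}{\xi_2}$.

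\emph{Continuity.} The operator $\calA_\xi$ has the Hilbert-Schmidt kernel $K_\xi\at{x,y}=\chi_\xi\at{x}\,a\at{x-y}\,\chi_\xi\at{y}$, and since $a$ is continuous and unimodal we have $\norm{a}_\infty=a\at{0}<\infty$. For $\xi_1<\xi_2$ a straightforward estimate of the symmetric difference of boxes gives
\[
\bnorm{\calA_{\xi_2}-\calA_{\xi_1}}_\mathrm{HS}^2\leq 4\,a\at{0}^2\at{\xi_2^2-\xi_1^2}\,,
\]
so $\xi\mapsto\calA_\xi$ is continuous in operator norm. Since $\la_\xi$ is a simple, isolated, positive eigenvalue, standard perturbation theory for compact self-adjoint operators yields continuity of $\la_\xi$ and of the one-dimensional spectral projection, which gives the $\fspaceL^2$-continuity of $v_\xi$ once the sign is pinned down by $v_\xi\in\widetilde{\calN}_\xi$. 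One can also argue directly: given $\xi_n\to\xi$, the sequence $\{v_{\xi_n}\}$ is bounded, and the identity $v_{\xi_n}=\la_{\xi_n}^{-1}\calA_{\xi_n}v_{\xi_n}$ combined with the compactness of $\calA_\xi$ and the norm convergence $\calA_{\xi_n}\to\calA_\xi$ extracts a strongly convergent subsequence whose limit lies in $\calN$ and solves $\calA_\xi v_*=\la_\xi v_*$, so $v_*=v_\xi$ by uniqueness.

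\emph{Asymptotics and main obstacle.} The Hilbert-Schmidt bound also yields $\la_\xi\leq\norm{\calA_\xi}_\mathrm{op}\leq 2\,a\at{0}\xi$, hence $\la_\xi\to 0$ as $\xi\searrow 0$. The upper bound $\la_\xi\leq 1$ is already built into \eqref{Prop:KR.PEqn5} via Young's inequality, so for the limit $\la_\xi\to 1$ it suffices to produce, for each $\eps>0$, an odd compactly supported normalized test function with Rayleigh quotient exceeding $1-\eps$. Fix any nontrivial normalized odd Schwartz function $\psi$ and set $\psi_\delta\at{x}=\delta^{1/2}\psi\at{\delta x}$; then Plancherel combined with dominated convergence and the continuity of $\hat a$ at the origin (with $\hat a\at{0}=\int_\Rset a\at{x}\dint{x}=1$) yields $\bskp{\psi_\delta}{a\ast\psi_\delta}\to 1$ as $\delta\searrow 0$. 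Truncating $\psi_\delta$ to $I_\xi$ for sufficiently large $\xi$ and renormalizing then produces the required test function. This limit is the main obstacle of the proof: the formal limit operator $v\mapsto a\ast v$ has purely continuous spectrum on $\fspaceL^2\at\Rset$, so compactness-based arguments fail and the explicit Fourier-based construction of near-maximizers appears unavoidable.
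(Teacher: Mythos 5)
Your proposal is correct and follows the same variational backbone as the paper (the Rayleigh-quotient characterization \eqref{Prop:KR.PEqn4}, the zero-extension test function for monotonicity, strictness via simplicity and the support properties of $\widetilde{\calN}_{\xi}$, and the Young bound \eqref{Prop:KR.PEqn5} for $\la_\xi\leq1$), but two of your technical blocks genuinely differ from the paper's. For continuity, the paper never estimates operator norms: it runs a soft weak-compactness argument, extracting a weakly convergent subsequence of $v_{\xi_n}$, upgrading $\calA_{\xi_n}v_{\xi_n}$ to strong convergence via $\chi_{\xi_n}\to\chi_\xi$, and identifying the limit through the eigenvalue equation and Proposition \ref{Prop:KR}; you instead prove Hilbert--Schmidt (hence operator-norm) continuity of $\xi\mapsto\calA_\xi$ and invoke perturbation theory for the simple top eigenvalue, with your "direct argument" being essentially the paper's compactness step with norm convergence substituted for the paper's strong-convergence lemma. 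Your route is more quantitative (it yields a modulus of continuity and the clean bound $\la_\xi\leq 2\,a\at{0}\,\xi$, which settles the $\xi\searrow0$ limit more transparently than the paper's rather terse remark), at the price of using $\norm{a}_\infty=a\at{0}<\infty$ --- harmless under Assumption \ref{Ass:Kernel}, but the paper's softer argument would survive kernels that blow up at the origin. For $\la_\xi\to1$ the paper computes the Rayleigh quotient of the explicit odd step profile $\tilde{v}_\xi\at{x}=-\at{2\xi}^{-1/2}\sgn\at{x}\chi_\xi\at{x}$ and passes to the limit by dominated convergence, which gives concrete information on how the value $1$ is approached; your construction via rescaled odd Schwartz functions, Plancherel, continuity of $\widehat{a}$ at $k=0$, and truncation is equally valid (the truncation step works because the quadratic form $v\mapsto\skp{v}{a\ast v}$ is $\fspaceL^2$-continuous and $\chi_\xi\psi_\delta\to\psi_\delta$), and is arguably shorter, but it is less explicit. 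Both limits, the monotonicity, and the $\fspaceL^2$-continuity of $v_\xi$ are all covered, so no gap remains.
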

\begin{proof}
\emph{\ul{Monotonicity}}\,: %
Let $0<\xi_1<\xi_2<\infty$ be fixed. Since $v_{\xi_1}$ from Proposition \ref{Prop:KR} is supported in $I_{\xi_1}$, we have
\begin{align*}
\chi_{\xi_1}\cdot  v_{\xi_1} = v_{\xi_1} = \chi_{\xi_2}\cdot  v_{\xi_1}\,,\qquad 
\calF_{\xi_1}\at{v_{\xi_1}}=
\calF_{\xi_2}\at{v_{\xi_1}}\,,
\end{align*}
and the optimization problem \eqref{Prop:KR.PEqn4} ensures the nonstrict monotonicity $\la_{\xi_1}\leq\la_{\xi_2}$. However, $v_{\xi_1}$ cannot be an eigenfunction of $\calA_{\xi_2}$ because the properties of $a$ imply that $\calA_{\xi_2} v_{\xi_1}$ has a larger support than $v_{\xi_1}\in\widetilde{\calN}_{\xi_1}$. We thus obtain $\la_{\xi_1}\neq\la_{\xi_2}$. 
\par
\emph{\ul{Continuity and asymptotics for small $\xi$}}\,:
Suppose that $\at{\xi_n}_{n\in\Nset}$ is a given sequence with
\begin{align*}
\xi_n\to\xi\in\oointerval{0}{\infty}\,, \qquad \la_{\xi_n}\to\la\in\oointerval{0}{\infty}\,.
\end{align*} 
By weak compactness we can extract a (not relabeled) subsequence such that the normalized eigenfunctions $v_{\xi_n}$ converge weakly to some limit $v\in\fspaceL^2_\odd\at\Rset$, where the weak closedness of $\calN$ ensures $v\in\calN$. The properties of convolution operators combined with $\chi_{\xi_n}\to\chi_n$ imply that $\calA_{\xi_n} v_{\xi_n}$ converges strongly to $\calA_{\xi}v$ and this guarantees --- thanks to the eigenvalue equation for $v_{\xi_n}$ --- the strong convergence of $v_{\xi_n}$. In particular, we obtain
\begin{align}
\label{Prop:KRMon.PEqn1}
\la\, v = \calA_\xi v\,,\qquad \norm{v}_2=1\,,\qquad 2\calF_\xi\at{v}=\la
\end{align}
and hence $\la\leq\la_\xi$. On the other hand, \eqref{Prop:KR.PEqn4} ensures $\la_\xi\leq \la$ since
\begin{align*}
2\calF_\xi\at{\tilde{v}}=\lim_{n\to\infty} 2\calF_{\xi_n}\at{\tilde{v}}\leq\lim_{n\to\infty}\la_{\xi_n}=\la
\end{align*}
holds for any $\tilde{v}\in\fspaceL^2_\odd\at\Rset$. We thus get $\la=\la_\xi$ and Proposition \ref{Prop:KR} combined with \eqref{Prop:KRMon.PEqn1} yields $v=v_{\xi}$. In summary, we have shown for any convergent sequence $\at{\xi_n}_{n\in\Nset}$ that $\la_\xi$ is the unique accumulation point of the sequence $\bat{\la_{\xi_n}}_{n\in\Nset}$ (which is bounded due to the monotonicity with respect to $\xi$), and this gives rise to the claimed continuity of $\la_\xi$. Our arguments also yields the strong convergence $v_{\xi_n}\to v_\xi$ and analogously we derive $\xi_n\to0$ from $\la_{\xi_n}\to0$ because $\calA_{\xi_n}v_{\xi_n}\to0$ holds along any weakly convergent subsequence.
\par
\emph{\ul{Asymptotics for large $\xi$}}\,:
Direct computations for the piecewise constant and compactly supported test function $\tilde{v}_\xi $ with
\begin{align*}
\tilde{v}_\xi\at{x} = -\at{2\xi}^{-1/2}\sgn\at {x}\,\chi_\xi\at{x}
\end{align*} 
yield $\norm{\tilde{v}_\xi}_2=1$ as well as
\begin{align*}
\calF_\xi\at{\tilde{v}_\xi}&=\frac{1}{4\xi}\int\limits_{-\xi}^{+\xi}\int\limits_{-\xi}^{+\xi}a\at{x-y}\,\sgn\at{x}\,\sgn\at{y}\dint{y}\dint{x}=
\frac{1}{2\xi}\int\limits_{0}^{\xi}\int\limits_{0}^{\xi}\bat{a\at{x-y}-a\at{x+y}}\dint{y}\dint{x}
\\&= \frac{1}{2\xi}\int\limits_{0}^{2\xi}\int\limits_{0}^{+t} \bat{a\at{s}-a\at{t}}\dint{s}\dint{t}
=\int\limits_0^{2\xi} a\at{z}\frac{2\xi-2z}{2\xi}\dint{z}\,.
\end{align*}
Using \eqref{Prop:KR.PEqn4} and the Dominated Convergence Theorem we therefore get
\begin{align*}
\liminf_{\xi\to\infty} \la_\xi\geq 2\lim_{\xi\to\infty} \calF_\xi\at{\tilde{v}_\xi}=2\int\limits_0^\infty a\at{z}\dint{z}=1
\end{align*}
thanks to Assumption \ref{Ass:Kernel}. On  the other hand, the Young estimate \eqref{Prop:KR.PEqn5} applied to $v_\xi$ ensures
\begin{align*}
\limsup_{\xi\to\infty} \la_{\xi}=2\limsup_{\xi\to\infty}\calF_\xi\at{v_\xi}\leq1\,,
\end{align*} 
and  the proof is complete.
\end{proof}
%
%
\paragraph{Approximation}
%
%
The unique normalized Krein-Rutmann eigenfunction as provided by Proposition \ref{Prop:KRMon} can be computed as limit of the sequence $\at{v_n}_{n\in\Nset}\subset\calN$ with 
\begin{align*}
v_n:=\frac{\calA_\xi ^nv_0}{\bnorm{\calA_\xi ^nv_0}_2}
\end{align*}
and arbitrary $v_0\in\calN$. Moreover, using $\calF_\xi\at{-\sgn \abs{v}}\geq \calF_\xi\at{v}$ we deduce that $\la_\xi$ is not only the largest eigenvalue but also the spectral radius of $\calA_\xi$ and hence the exponential convergence of $v_n$ as $n\to\infty$. This approximation scheme is often called Power Method and a straight forward discretization (fine but equidistant spatial grid and Riemann sums instead of integrals) has been used to produce the numerical results in this paper. Alternatively, one can write the sequence as
\begin{align*}
\la_n v_{n+1}=\calA_\xi v_n\,,\qquad \la_n=\norm{\calA_\xi v_n}_2\,,
\end{align*}
where $\la_n$ will converge as $n\to\infty$ to the Krein-Rutmann eigenvalue $\la_\xi$. A similar \emph{improvement dynamics} can also be applied to the nonlinear eigenvalue problem \eqref{Eqn:EV1}, but the convergence is more subtle due to the lack of uniquness results for general functions $f$. We refer to \cite{HM20} for numerical examples and a more detailed discussion of the analytical properties.
%
\paragraph{Generalizations}
%
%
We finally discuss the case of less regular kernels $a$.
The crucial ingredient to the regularity step $v\in\calN \Rightarrow  \calA_\xi v\in\widetilde{\calN}_\xi$ in the proof of Proposition \ref{Prop:KR} is \eqref{Prop:KR.PEqn0}. This condition hinges on the strict unimodality of $a$ as in  \eqref{Eqn:Strict} and is not satisfied for all $a\in\calU$, see Figure \ref{Fig.Supports} for an illustration.
For kernels like the tent map \eqref{Eqn:Kernel2}, a recursive argument reveals the implication $v\in\calN \Rightarrow \calA_\xi^n  v\in \widetilde{\calN}_\xi$ for all sufficiently large $n\in\Nset$, and this guarantees that all assertions in Proposition \ref{Prop:KR} remain valid since the Krein-Rutmann eigenfunction still belongs to $\tilde{\calN}_\xi$. This, however, is no longer true for the kernel \eqref{Eqn:Kernel3}. Instead, we have to distinghuish between the following three parameter regimes: 
\begin{enumerate}
\item %
$0<\xi\leq\tfrac14$ implies that  $\calA_\xi v$ vanishes for any $v\in\fspaceL^2_\odd\at\Rset$, so $\calA_\xi$ restricted to $\fspaceL^2_\odd\at\Rset$ is actually the trivial operator and we have $\la_\xi=0$, see the third panel in Figure \ref{Fig.KR}. 
\item 
In the case of $\tfrac14<\xi<\tfrac12$, there exists a unique and normalized Krein-Rutmann eigenfunction $v_\xi$, which does not belong to $\widetilde{\calN}_\xi$ but to a modified subcone of $\calN$ with
\begin{align*}
v^\prime\at{\tfrac12-\xi}<0\,,\qquad
v\at{x}<0\;\; \text{for}\;\;\tfrac12-\xi<x\leq \xi\,,\qquad
v\at{x}=0\;\;\text{for}\;\; 0<x<\tfrac12-\xi \;\text{and}\; x>\xi\,,
\end{align*}
see the first column in Figure \ref{Fig.Ex3}.
\item For $\xi\geq\tfrac12$, we find again $v_\xi\in\widetilde{\calN}_\xi$.
\end{enumerate}
In summary, if $a$ is given by \eqref{Eqn:Kernel3}, then 
Proposition \ref{Prop:KR} must be formulated more carefully
and the analogue to Proposition \ref{Prop:KRMon} stipulates the one-to-one correspondence between $\xi\in\oointerval{\tfrac14}{\infty}$ and $\la_\xi\in\oointerval{0}{1}$. Similar statements apply to any other kernel $a\in\calU$ that admits a constant plateau near $x=0$.
\begin{figure}[t!]%
\centering{%
\includegraphics[width=0.5\textwidth]{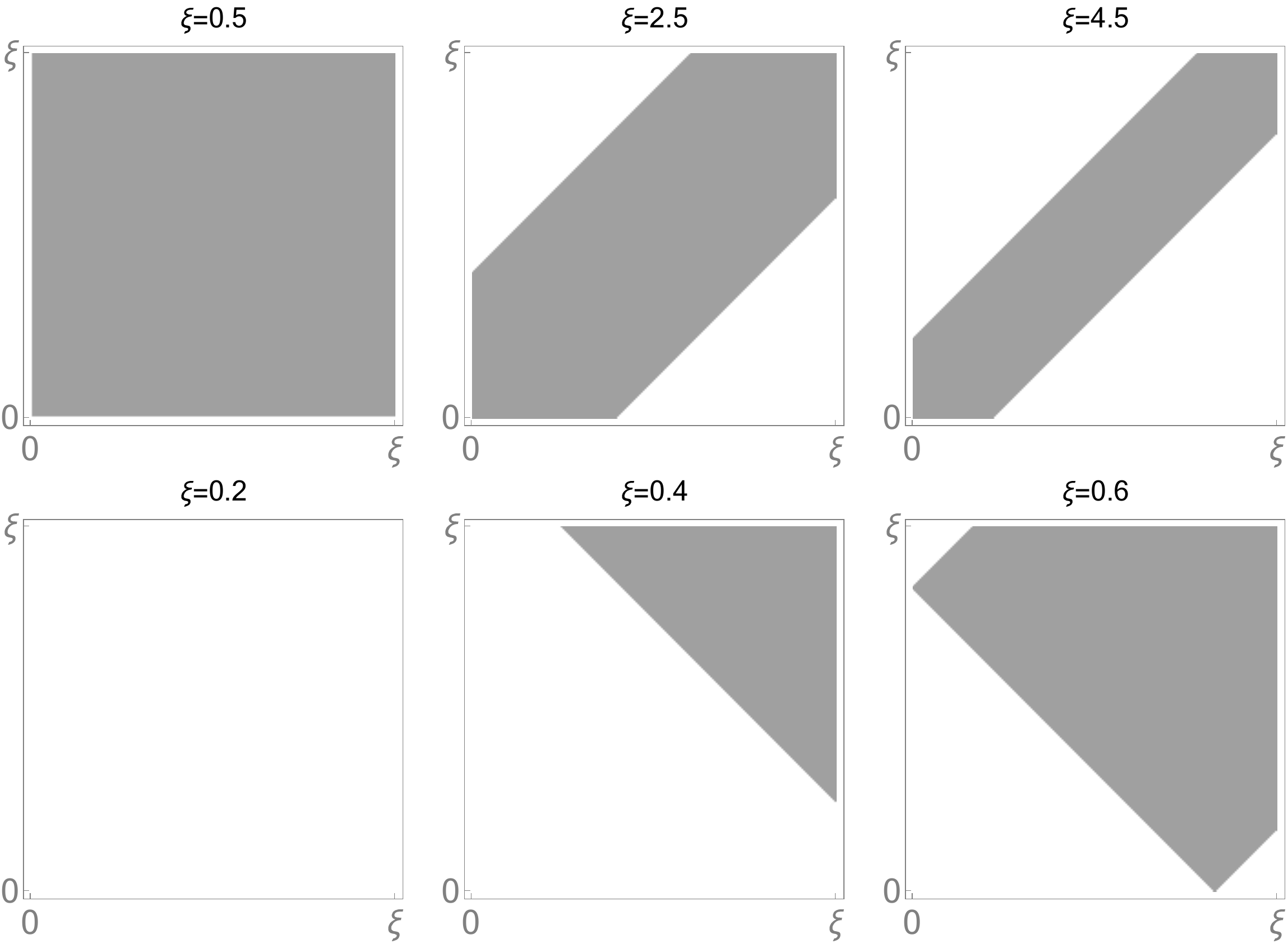}}%
\caption{%
Support of the integral kernel corresponding to $\calA_\xi$, see \eqref{Eqn:OpA2}, for several values of $\xi$ and the degenerate convolution kernels  \eqref{Eqn:Kernel2} (\emph{top row}) and \eqref{Eqn:Kernel3} (\emph{bottom row}). Since condition \eqref{Prop:KR.PEqn0} is not satisfied for some values of $\xi$, the proof and/or the precise statement of Proposition \ref{Prop:KR} needs to be modified as discussed in the text. See also Figure \ref{Fig.KR} for the corresponding Krein-Rutmann eigenvalues.
}%
\label{Fig.Supports}%
\end{figure} %
%
%
%
\section{Nonlinear eigenfunctions with unimodal profile}\label{sect:non}
%
%
We are now able to prove our main result from \S\ref{sect:intro} in two steps. First we show that the general case with given kernel $a$ and $\zeta>0$ is equivalent to the special case $\zeta=0$ for a modified kernel $\tilde{a}$.
\begin{proposition}[transformation to special case]
\label{Prop:Transformation}
A function $u$ solves \eqref{Eqn:EV1} with $\si\in \oointerval{\zeta}{\zeta+\eta}$ if and only
the eigenvalue equation
\begin{align}
\label{Eqn:EV2}
\tilde{\si}\,u=\tilde{a}\ast \tilde{f}\at{u}
\end{align}
is satisfied with 
\begin{align*}
\tilde{\si}:=\si-\zeta=\at{1-\mu}\,\sigma\in\oointerval{0}{\eta}\,,\qquad 
\mu := \sigma^{-1}\zeta \in \oointerval{0}{1}\,.
\end{align*}
Here, the transformed kernel
\begin{align}
\label{Prop:Transformation.Eqn1}
\tilde{a}:=
\at{1-\mu}\,a\ast\bat{1+\mu\, a + \mu^2\, a\ast a +
\mu^3 a\ast a\ast a +\hdots}\,,
\end{align}
is well-defined, depends on $\si$, and complies with Assumption \ref{Ass:Kernel}, while the simplified nonlinearity
\begin{align*}
\tilde{f}\at{r}:=\left\{
\begin{array}{lccl}
0&& \text{for}& r\in\ccinterval{0}{\theta}\,,\\
\eta\, \at{r-\theta}&& \text{for}&r\in\cointerval{\theta}{\infty}\,,\\
\end{array}
\right.
\end{align*}
represents \eqref{Eqn:Nonl} with $\zeta=0$.
\end{proposition}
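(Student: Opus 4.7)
The plan is to split off the linear part $\zeta\,r$ from $f$ so that the nonlinear equation becomes a linear perturbation problem for $u$ with $\tilde{f}\at{u}$ as the inhomogeneity, and then to invert the perturbation via a Neumann series. Concretely, from \eqref{Eqn:Nonl} one reads off the identity $f\at{r}=\zeta\,r+\tilde{f}\at{r}$ for every $r\geq 0$. Substituting this into \eqref{Eqn:EV1} and dividing by $\si$ yields the equivalent formulation
\begin{align*}
u-\mu\, a\ast u =\si^{-1}\, a\ast\tilde{f}\at{u}\,,
\end{align*}
where $\mu=\si^{-1}\zeta\in\oointerval{0}{1}$ since $\si>\zeta\geq 0$.

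The next step is to invert the linear operator $I-\mu\, a\ast\cdot$ via its Neumann series. Young's inequality combined with $\nnorm{a}_1=1$ shows that convolution with $a$ has operator norm at most one on every $\fspaceL^p\at\Rset$, so $\mu\,a\ast\cdot$ is a strict contraction and $\sum_{k\geq 0}\mu^k\at{a\ast\cdot}^k$ converges in operator norm. Denoting the $k$-fold self-convolution of $a$ by $a^{\ast k}$ and applying the inverse to the right-hand side gives
\begin{align*}
u=\si^{-1}\,\Bat{\sum_{k=0}^{\infty}\mu^k\,a^{\ast\at{k+1}}}\ast\tilde{f}\at{u}=\tilde{\si}^{-1}\,\tilde{a}\ast\tilde{f}\at{u}
\end{align*}
with $\tilde{\si}=\at{1-\mu}\,\si$ and $\tilde{a}$ as in \eqref{Prop:Transformation.Eqn1}. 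Every step above is a genuine equivalence, so this already proves the biconditional between \eqref{Eqn:EV1} and \eqref{Eqn:EV2}.

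It remains to verify that $\tilde{a}$ satisfies Assumption \ref{Ass:Kernel}. The $\fspaceL^1$-convergence of the defining series and the normalization $\Rint\tilde{a}\at{x}\dint{x}=1$ both follow from monotone convergence applied to the nonnegative terms $\mu^k a^{\ast\at{k+1}}$, the identity $\Rint a^{\ast k}\at{x}\dint{x}=1$, and the geometric series $\sum_k\mu^k=\at{1-\mu}^{-1}$. Membership in $\fspaceL^2\at\Rset$ is equally easy via $\nnorm{a^{\ast\at{k+1}}}_2\leq \nnorm{a}_2$. Evenness and nonnegativity propagate trivially through convolutions and positive linear combinations, and the strict unimodality \eqref{Eqn:Strict} is inherited from the leading term $\at{1-\mu}\,a$ provided every further summand contributes a function with nonpositive derivative for $x>0$.

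The main obstacle in my eyes is to show that every iterated convolution $a^{\ast k}$ lies in $\calU$, since this is what turns non-increasingness on $\ccinterval{0}{\infty}$ into a property stable under the summation. For this I would invoke the classical one-dimensional fact that the convolution of two even, nonnegative, and unimodal integrable functions is again even, nonnegative, and unimodal. A clean route is the layer-cake representation: any $a\in\calU\cap\fspaceL^1\at\Rset$ is a nonnegative superposition of indicator functions of symmetric intervals, convolutions of two such indicators are trapezoidal and hence again lie in $\calU$, and positive superpositions preserve membership in $\calU$. Induction then yields $a^{\ast k}\in\calU$ for all $k\geq 1$, and because $\calU$ is a convex cone closed under $\fspaceL^2$-limits the entire series defining $\tilde{a}$ lies in $\calU$ as well, closing the argument.
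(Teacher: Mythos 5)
Your proposal is correct and follows essentially the same route as the paper: split off the linear part $\zeta\,r$ of $f$, invert $\calL w=w-\mu\,a\ast w$ by a Neumann series (justified by Young's inequality and $0<\mu<1$), identify $\tilde{a}$, and verify Assumption \ref{Ass:Kernel} via the geometric series and the stability of $\calU$ under convolution, with strict unimodality coming from the leading term $\at{1-\mu}\,a$ (the paper writes this as $\tilde{a}^\prime=\at{1-\mu}\,\breve{a}\ast a^\prime<0$). The only cosmetic difference is that you prove unimodality of the iterated convolutions by a layer-cake argument, whereas the paper appeals to the cone invariance already recorded in Lemma \ref{Lem:OpA}.
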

\begin{proof}
\emph{\ul{Linear auxiliary operator and modified kernel}}\,: %
The Young estimate
\begin{align*}
\norm{a\ast w}_2\leq \norm{a}_1\norm{w}_2=\norm{w}_2
\end{align*}
combined with $0<\mu<1$ implies that the linear operator 
\begin{align*}
\calL:\fspaceL^2\at\Rset\to\fspaceL^2\at\Rset\,,\qquad \calL  w := w-\mu\,a \ast w
\end{align*}
is continuously invertible. The Neumann formula 
\begin{align*}
\calL^{-1}w = \breve{a}\ast w\,,\qquad \breve{a}:=1+\mu\,
a +\mu^2\,a\ast a +\,\mu^3\, a\ast a\ast a\ast \hdots
\end{align*} 
reveals $\bar{a}\in\calU$ thanks to Lemma \ref{Lem:OpA}, which in turn ensures that $\calL$ respects the even-odd parity and mediates the implication \eqref{Eqn:LinTrafo}. We further have
\begin{align*}
\tilde{a}=\at{1-\mu} \,a\ast \breve{a} = \at{1-\mu} \, \breve{a}\ast a \in \calU
\end{align*} 
and compute
\begin{align*}
\int\limits_\Rset\tilde{a}\at{x}\dint{x}=\frac{1}{1-\mu}\sum_{k=0}^\infty \mu^k=1
\end{align*}
as well as
\begin{align*}
\tilde{a}^\prime:=
\at{1-\mu}\,\breve{a}\ast a^\prime\,.
\end{align*}
The latter formula provides via
\begin{align}
\notag
\tilde{a}^\prime\at{x}<0\qquad\text{ for all $x>0$}
\end{align}
the strict unimodality of $\tilde{a}$.
\par
\emph{\ul{Transformation of the nonlinear problem}}\,: 
Let $\pair{\si}{u}$ be a given solution to \eqref{Eqn:EV1}. Our definitions imply 
\begin{align*}
\tilde{\sigma}\bat{u-\mu\, a \ast u}=\at{1-\mu }\bat{\si\, u - \zeta\, a\ast u} = \at{1-\mu }\bat{ a\ast \bat{f\at{u}-\zeta\, u}} = \at{1-\mu }\, a\ast \tilde{f}\at{u}
\end{align*} 
and hence the validity of \eqref{Eqn:EV2} thanks to the existence of $\calL^{-1}$. Similarly, the reverse implication follows by applying $\calL$ to both sides of \eqref{Eqn:EV2}.
\end{proof}
In the second step we finally establish our existence and uniqueness result for nonlinear eigenfunctions.
\begin{proposition}[nonlinear existence and uniqueness result]
\label{Prop:ExUni}
Equation \eqref{Eqn:EV1} admits for any $\si\in\oointerval{\zeta}{\zeta+\eta}$ a unique solution $u_\si\in\calU$. 
\end{proposition}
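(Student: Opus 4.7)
My plan is to chain Proposition~\ref{Prop:Transformation} with the linear results of \S\ref{sect:lin}. By Proposition~\ref{Prop:Transformation}, a function $u$ solves \eqref{Eqn:EV1} for $\si\in\oointerval{\zeta}{\zeta+\eta}$ if and only if it solves the reduced equation $\tilde\si\,u=\eta\,\tilde a\ast\at{u-\theta}_+$ with $\tilde\si\in\oointerval{0}{\eta}$, and since $\tilde a$ again satisfies Assumption~\ref{Ass:Kernel} it suffices to treat this reduced equation in $\calU$.

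For existence, I set $\tilde\la:=\tilde\si/\eta\in\oointerval{0}{1}$, pick the unique $\xi\in\oointerval{0}{\infty}$ with $\la_\xi=\tilde\la$ provided by Proposition~\ref{Prop:KRMon}, and take the normalized Krein-Rutmann eigenfunction $v_\xi\in\widetilde{\calN}_\xi$ from Proposition~\ref{Prop:KR}. The antiderivative $w\at{x}:=-\int_{\abs x}^{\xi}v_\xi\at{y}\dint{y}$ (extended by zero outside $I_\xi$) lies in $\calU$, so the candidate $u:=\at{\eta c/\tilde\si}\,\tilde a\ast w$ again belongs to $\calU$ by closure under convolution of even nonnegative unimodal functions. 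Here $c>0$ is fixed by the normalization $u\at\xi=\theta$, which is possible because $\at{\tilde a\ast w}\at\xi>0$ by the strict positivity of $\tilde a$ and $w$ near $\xi$. Differentiating $u$ and invoking the eigenvalue identity $\tilde a\ast v_\xi=\la_\xi v_\xi$ on $I_\xi$ produces $u^\prime=c\,v_\xi=c\,w^\prime$ on $\oointerval{-\xi}{\xi}$, so integration against $u\at\xi=\theta$ yields $u-\theta=c\,w$ throughout $I_\xi$, while the unimodality of $u$ extends this to $\at{u-\theta}_+=c\,w$ on the whole line. Plugging back into the definition of $u$ shows that the reduced equation holds.

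For uniqueness, let $u\in\calU$ be any nontrivial solution. The representation $u=\at{\eta/\tilde\si}\,\tilde a\ast\at{u-\theta}_+$ combined with the differentiability and strict unimodality of $\tilde a$ guarantees that $u$ is $\fspaceC^1$ and strictly decreasing on $\oointerval{0}{\infty}$, so $u\at\xi=\theta$ at a unique $\xi\in\oointerval{0}{\infty}$. Differentiating the reduced equation then gives the linear identity $\at{\tilde\si/\eta}\,v=\calA_\xi v$ for the nontrivial function $v:=\chi_\xi\cdot u^\prime\in\calN$, and the symmetric pairing
\begin{align*}
\la_\xi\,\skp{v}{v_\xi}=\skp{v}{\calA_\xi v_\xi}=\skp{\calA_\xi v}{v_\xi}=\at{\tilde\si/\eta}\,\skp{v}{v_\xi}
\end{align*}
combined with $\skp{v}{v_\xi}>0$ — which holds since $v_\xi$ is strictly negative on $\oointerval{0}{\xi}$ while $v$ is nonpositive and nontrivial there — forces $\tilde\si/\eta=\la_\xi$. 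Proposition~\ref{Prop:KRMon} then determines $\xi$, Proposition~\ref{Prop:KR} determines $v$ up to a positive multiple, and the scalar condition $u\at\xi=\theta$ fixes the remaining factor, so $u$ coincides with the existence candidate.

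The main obstacle I expect is the pairing step that promotes the cone eigenfunction $v$ to the \emph{principal} Krein-Rutmann eigenspace: Proposition~\ref{Prop:KR} only records simplicity of the top eigenvalue, not that every eigenfunction of $\calA_\xi$ lying in $\calN$ must sit there, so this has to be argued separately from self-adjointness together with the strict-sign property of $v_\xi$ on $\oointerval{0}{\xi}$. A secondary technical point is the bootstrap regularity required to differentiate $\at{u-\theta}_+$ and identify its derivative with $\chi_\xi\cdot u^\prime$ almost everywhere, but this follows quickly from the compact support of $\at{u-\theta}_+$ together with the differentiability of $\tilde a$.
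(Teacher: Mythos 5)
Your proposal is correct and follows essentially the same route as the paper: reduce to $\zeta=0$ via Proposition~\ref{Prop:Transformation}, select the cut-off through the bijection $\xi\mapsto\la_\xi$ of Proposition~\ref{Prop:KRMon}, construct the solution by convolving the integrated Krein-Rutmann eigenfunction with the kernel and normalizing at $\xi$, and for uniqueness differentiate to reach the linear problem and invoke simplicity plus monotonicity. The only real difference is that you make explicit, via the self-adjoint pairing $\la_\xi\skp{v}{v_\xi}=\skp{\calA_\xi v}{v_\xi}=\at{\tilde{\si}/\eta}\skp{v}{v_\xi}$ together with $\skp{v}{v_\xi}>0$, the step showing that a nontrivial cone eigenfunction must correspond to the \emph{largest} eigenvalue --- a point the paper's proof leaves implicit when it cites Propositions~\ref{Prop:KR} and \ref{Prop:KRMon} to conclude $\xi=\xi_\si$ and $v=c\,v_{\xi_\si}$.
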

\begin{proof} 
\emph{\ul{Preliminaries}}\,: %
Within this proof, $\si$ is fixed. In view of Proposition \ref{Prop:Transformation}, we can assume $\theta=0$ with $\si\in\oointerval{0}{\eta}$ because otherwise we replace $a$ by $\tilde{a}$ and $\si$ by $\tilde{\si}$.  Moreover, Proposition \ref{Prop:KRMon} guarantees that 
\begin{align}
\label{Prop:ExUni.PEqn8}
\la_{\xi_\si}=\eta^{-1}\si
\end{align} 
holds for precisely one cut-off parameter $\xi_\si\in\oointerval{0}{\infty}$.
\par
\emph{\ul{Existence and construction}}\,: %
By integration, there exists a unique function $\tilde{u}_\si\in\calU$ with
\begin{align}
\label{Prop:ExUni.PEqn2}
\tilde{u}_\si^\prime\at{x}=v_{\xi_\si}\at{x}\quad\text{for}\quad x\in\Rset\,,\qquad \tilde{u}_\si\at{x}=0\quad\text{for}\quad \abs{x}\geq \xi_\si,
\end{align}
where $v_{\xi_\si}\in\widetilde{\calN}_\xi$ is the normalized and compactly supported Krein-Rutmann eigenfunction from Proposition \ref{Prop:KR}. Since $\tilde{u}_\si$ is strictly positive for $\abs{x}<\xi_\si$, the function $a\ast\tilde{u}_\si$ attains a positive value at $x=\xi_\si$. We thus define $u_\si\in\calU$ by
\begin{align}
\label{Prop:ExUni.PEqn1}
u_\si:=\tau_\si\,a\ast \tilde{u}_\si
\,,\qquad%
\tau_\si:=
  \frac{\theta}{\at{a\ast \tilde{u}_\si}\at\xi}
\end{align}
and observe that this guarantees
\begin{align*}
u_\si\at{\pm\xi_\si}=\theta\,,\qquad\quad \theta\leq u_\si\at{x}\quad \text{for}\quad \abs{x}<\xi_\si
\,,\qquad\quad 0\leq u_\si\at{x}\leq \theta\quad \text{for}\quad \abs{x}>\xi_\si
\end{align*} 
due to the unimodality of $u_\si$. Since we also have
\begin{align*}
u^\prime_\si\at{x} = \tau_\si\,\bat{a\ast \tilde{u}_\si^\prime}\at{x}=
\tau_\si\,\bat{a\ast v_{\xi_\si}}\at{x}=\tau_\si\,\la_{\xi_\si}\, v_{\xi_\si}\at{x}=\tau_\si\,\la_{\xi_\si}\tilde{u}^\prime_\si\at{x}
\end{align*}
for all $\abs{x}\leq{\xi_\si}$ (but not for $\abs{x}>\xi_\si$), we find
\begin{align*}
u_\si\at{x}=\theta+\tau_\si\,\la_{\xi_\si}\,\tilde{u}_\si\at{x}
\quad \text{for}\quad \abs{x}\leq \xi_\si\,.
\end{align*}
In particular, we have
\begin{align*}
f\at{u_\si\at{x}}=\tau_\si \,\la_{\xi_\si}\,\eta\,\tilde{u}_\si\at{x}\qquad \text{for all}\quad x\in\Rset
\end{align*}
thanks to \eqref{Eqn:Nonl} with $\zeta=0$, so
\begin{align*}
a\ast f\at{u_\si}=a\ast\bat{\tau_\si\la_{\xi_\si}\,\eta\,\tilde{u}_\si}= \la_{\xi_\si}\,\eta\, u_\si=\si\,u_\si
\end{align*}
follows from our definition of $u_\si$ in \eqref{Prop:ExUni.PEqn1}. 
\par%
\emph{\ul{Uniqueness}}\,: %
Now suppose that $0\neq u\in\calU$ solves \eqref{Eqn:EV1} for $\zeta=0$. This gives
$0<u\at{0}=\norm{u}_\infty$ (otherwise $f\at{u}$ would vanish identically) and there exists $\xi\in\oointerval{0}{\infty}$ with $u\at{\xi}=\theta$. By differentiating \eqref{Eqn:EV1} with respect to $x$ --- and using the notations  \eqref{Eqn:NesCond3c}, \eqref{Eqn:NesCond3d}  --- we establish the formulas \eqref{Eqn:NesCond3a} and \eqref{Eqn:NesCond3b}, so Propositions \ref{Prop:KR} and \ref{Prop:KRMon} provide
\begin{align*}
\xi=\xi_\si \qquad \text{and}\qquad v= c\, v_{\xi_\si}
\end{align*}
for some factor $c>0$. Combining this with \eqref{Prop:ExUni.PEqn2} and \eqref{Prop:ExUni.PEqn1} we conclude that the derivatives of $u$ and $u_\si$ are proportional, and the consistency relations
\begin{align*}
\int\limits_{\xi_\si}^\infty u^\prime\at{x}\dint{x}=u\at{\xi_\si} =\theta 
=u_\si\at{\xi_\si}=\int\limits_{\xi_\si}^\infty u_\si^\prime\at{x}\dint{x}
\end{align*}
give $u=u_\si$.
\end{proof}
%
%
%
%
\paragraph{Comments}
%
%
\begin{figure}[t!]%
\centering{%
\includegraphics[width=0.98\textwidth]{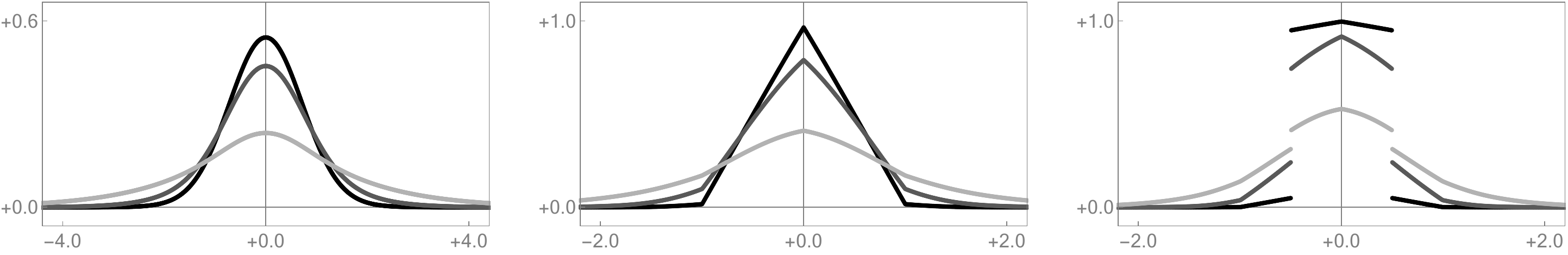}
}%
\caption{%
The transformed kernel \eqref{Prop:Transformation.Eqn1} for the Gaussian (\emph{left}), the tent map (\emph{middle}), and the indicator function (\emph{right}) from \eqref{Eqn:Kernel1}, \eqref{Eqn:Kernel2}, and \eqref{Eqn:Kernel3}. The 
three curves correspond to $\mu=0.1$ (black), $\mu=0.5$ (dark gray), $\mu=0.9$ (light gray).
}%
\label{Fig.kernels}%
\end{figure}%
Notice that the transformed kernel $\tilde{a}$ in \eqref{Prop:Transformation.Eqn1} depends also on $\zeta$ and satisfies the analogue to the crucial condition \eqref{Prop:KR.PEqn0} even if the kernel $a$ is not strictly unimodal due to a compact support or constant plateaus, see Figure \ref{Fig.kernels} for an illustration. Consequently, all results in this section cover for $\zeta>0$ the kernels \eqref{Eqn:Kernel2} and \eqref{Eqn:Kernel3} as well, and our comments at the end of \S\ref{sect:lin} on how to generalize Propositions \ref{Prop:KR} and \ref{Prop:KRMon} are relevant for $\zeta=0$ only.
\par
The transformed kernel $\tilde{a}$ also features prominently in \cite{TV14}, which studies the bilinear eigenvalue problem for the tent map kernel by different methods. In our notations, the main ideas can be described as follows. The function $\tilde{w}$ with
\begin{align*}
\tilde{w}\at{x}=\frac{\tilde{f}\bat{u\at{x}}}{\tilde{\si}}=\frac{\eta\,\chi_\xi\at{x}\, \bat{u\at{x}-\theta}}{\si-\zeta}\,,
\end{align*}
represents the inhomogeneity, is continuous (especially  at $x=\pm\xi$), and satisfies 
\begin{align*}
w^\prime\at{x}=\frac{\eta\,\chi_\xi\at{x}\,u^\prime\at{x}}{\si-\zeta}=\frac{\eta\,v\at{x}}{\si-\zeta}
\end{align*} 
for all $x\in\Rset$.  The linear problem for the effective derivative profile \eqref{Eqn:NesCond3c} can hence be written analogously to \eqref{Eqn:NesCond3b} as 
\begin{align}
\label{Eqn:DiffAppproach2}
v\at{x}-\frac{\si-\zeta}{\eta}\int\limits_{-\xi}^{+\xi}\tilde{a}\at{x-y} v\at{y}\dint{y}=0\qquad\text{for}\quad -\xi\leq x\leq+\xi
\end{align}
and \cite{TV14} solves this equation under the consistency relation $\int_0^\xi v\at{x}\dint{x}=\theta$ by combining two ingredients. The fixed point problem $\eqref{Eqn:DiffAppproach2}$ is solved with parameter $\xi$ by Wiener-Hopf methods. This requires detailed information on the Fourier transform of $\tilde{a}$ and direct computations reveal
\begin{align}
\notag
\widehat{\tilde{a}}\at{k} = \frac{\si\,\widehat{a}\at{k}}{\si-\zeta\, \widehat{a}\at{k}}=\frac{\si}{\eta}\at{1-\frac{\at{\zeta+\eta}\,\widehat{a}\at{k}\,k^2-\si\,k^2}{\zeta\,\widehat{a}\at{k}\,k^2-\si\,k^2}}
\end{align}
as well as $\widehat{a}\at{k}\,k^2=4\sin^2\at{k/2}$ for the kernel \eqref{Eqn:Kernel2}. Moreover, the value of $\xi$ is determined numerically by means of a scalar nonlinear equation equivalent to \eqref{Prop:ExUni.PEqn8}. This approach works well since $\widehat{a}$ is a nice known function and provides power series expressions for $v$ and $u$, which can be used to derive intricate but almost explicit approximation formulas.
\par
We finally mention that nonnegative eigenfunctions (being unimodal or not) cannot exist for $\si\geq\zeta+\eta$ since \eqref{Eqn:EV1} implies
\begin{align*}
\si\bnorm{u}_2\leq \bnorm{a\ast f\at{u}}_2\leq \bnorm{a}_1\,\bnorm{f\at{u}}_2\leq \at{\zeta+\eta}\bnorm{u}_2\,,
\end{align*}
where the last estimate is actually strict for any nontrivial $u$. Similarly, using
\begin{align*}
\si\int\limits_\Rset u\at{x}\dint{x}=\int\limits_\Rset f\bat{u\at{x}}\dint{x}
\end{align*}
we can disprove the existence of nonnegative eigenfunctions for $\si\leq\zeta$ under the additional assumption $u\in\fspaceL^1\at\Rset$. Notice that the eigenfunction from Proposition \ref{Prop:ExUni} are integrable due to \eqref{Eqn:EV2} and because $\tilde{f}\at{u_\si}$ is always compactly supported. More precisely,  
$u_\si$ decays as fast as $a$ or the transformed kernel $\tilde{a}$ for $\zeta=0$ and $\zeta>0$, respectively. A similar argument has been applied in \cite{HM20}, which proves the existence and the localization (but not the uniquness) of nonlinear eigenfunctions for more general functions $f$ in a nonlinear variational setting.
%
%
\appendix
%
\section{Formal asymptotic analysis}
%
%
We characterize the limiting behavior of the nonlinear eigenfunction $u_\si$ from Proposition \ref{Prop:ExUni}. We always start with the special case $\zeta=0$ and discuss the necessary modification in the case $\zeta>0$ afterwards. The natural quantity for the formal asymptotic analysis with $\zeta=0$ are the compactly supported functions
\begin{align}
\label{Eqn:App6}
w_\si\at{x} := \si^{-1}\,f\at{u_\si\at{x}} \,,
\end{align}
which determine $u_\si$ via
\begin{align}
\label{Eqn:App3}
u_\si = a\ast w_\si\,.
\end{align}
We rescale $w_\si$ according to
\begin{align*}
\ol{w}_\si\at{\ol{x}}=w\at{x}\,,\qquad x=\xi_\si\,\ol{x}\,,
\end{align*}
because this allows us to work on the fixed interval $\ol{I}:=\ccinterval{-1}{+1}$, and notice that
\begin{align*}
\ol{w}_\si\at{\pm 1}=0\,.
\end{align*}
Moreover, the nonlinear eigenvalue equation \eqref{Eqn:EV1} combined with 
$\chi_{\xi_\si} \cdot u_\si = \theta + \la_\si\,w_\si$ and $\si=\eta\,\la_\si$ yields
\begin{align}
\label{Eqn:App0}
\theta + \la_\si \,\ol{w}_\si 
=\ol{\calA}_{\xi_\si} \ol{w}_\si\,,
\end{align}
where the operator 
\begin{align}
\label{Eqn:App8}
\at{\ol{\calA}_\xi \ol{w}}\at{\ol{x}}:=
\int\limits_{-1}^{+1} \xi\,a\bat{\xi\,\ol{x}-\xi\,\ol{y}}\,\ol{w}\at{\ol{y}}\dint{\ol{y}}
\end{align}
is the rescaled counterpart of $\calA_\xi$ from \eqref{Eqn:DefOpA}. 
%
%
\paragraph{Small eigenvalues for $\zeta=0$}
%
%
Proposition \ref{Prop:KRMon} shows $\xi_\si\approx0$ and by Taylor expansion of \eqref{Eqn:App8} we get
\begin{align*}
\bat{\ol{\calA}_\xi \ol{w}}\at{\ol{x}}&= 
\xi\cdot\at{a\at{0}\int\limits_{-1}^{+1} \ol{w}\at{\ol{y}}\dint{\ol{y}}}-\xi^3\cdot \at{\tfrac12\abs{a^{\prime\prime}\at{0}}
\int\limits_{-1}^{+1} \ol{w}\at{\ol{y}}\ol{y}^2\dint{\ol{y}} }\\&\qquad \qquad -\ol{x}^2\cdot\xi^3\cdot\at{
\tfrac12\abs{a^{\prime\prime}\at{0}}
\int\limits_{-1}^{+1} \ol{w}\at{\ol{y}}\dint{\ol{y}}}+\DO{\xi^5}\,.
\end{align*}
\begin{figure}[t!]%
\centering{%
\includegraphics[width=0.71\textwidth]{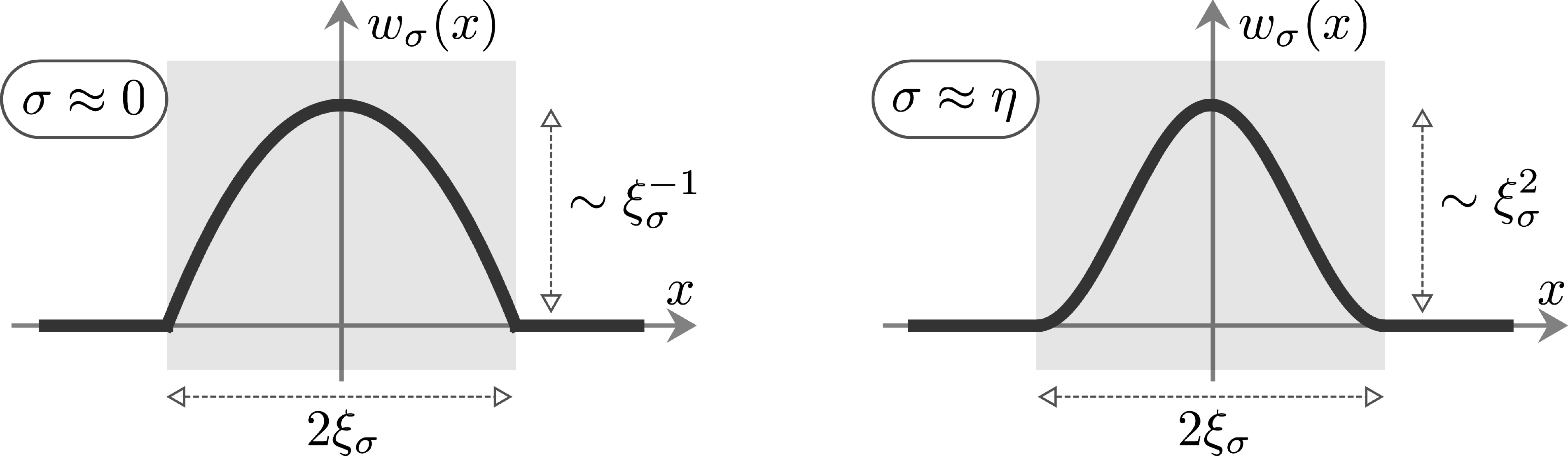}
}%
\caption{%
Schematic representation of the approximate formulas \eqref{App.Eqn4} 
(\emph{left panel}, $\xi_\si\sim\si^{1/3}$) and \eqref{App.Eqn5} (\emph{right panel}, $\xi_\si\sim\at{\eta-\sigma}^{-1}$), which describe the asymptotics of $u_\si$ in the special case $\zeta=0$. See also \eqref{Eqn:App6} and \eqref{Eqn:App3} as well as Conjectures \ref{Conj:A1} and \ref{Conj:A2}.%
}%
\label{Fig.LimitShapes}%
\end{figure}%
The eigenvalue equation \eqref{Eqn:App0} thus implies the approximate identity
\begin{align*}
\ol{w}_\si\at{\ol{x}}\approx c_\si\,\at{1-\ol{x}^2}
\end{align*}
for some constant $c_\si$ and any $\ol{x}\in\ccinterval{-1}{+1}$. Computing the integrals we find 
\begin{align*}
\theta +\la_\si\,c_\si{}\,\bat{1-\ol{x}^2}\approx c_\si\,
\xi_{\si}\,\Bat{\tfrac43\, a\at{0}}-c_{\si}\,\xi_\si^3\,\Bat{\tfrac{2}{15}\,\abs{a^{\prime\prime}\at{0}}}-\ol{x}^2\, c_{\si}\,\xi_\si^3\,\Bat{\tfrac{2}{3}\,\abs{a^{\prime\prime}\at{0}}}\,.
\end{align*}
and equating the coefficients --- first in front of $\ol{x}^2$ and afterwards in front of $1$ --- we identify the scaling relations
\begin{align*}
\la_\si \approx \tfrac23\,\abs{a^{\prime\prime}\at{0}}\,\xi_\si^3\,,\qquad \tfrac{4}{3}\,a\at{0}\,c_\si\,\xi_\si\approx \theta\,,
\end{align*}
where $\la_\si$ satisfies \eqref{Prop:ExUni.PEqn8}. Moreover, the function 
\begin{align}
\label{App.Eqn4}
w_\si\at{x}\approx \frac{3\,\theta}{4\, a\at{0}\, \xi_\si}\,\chi_{\xi_\si}\at{x}\,\at{1-\xi_\si^{-2}x^2}\
\end{align} 
approaches a Dirac distribution as $\si\to0$, see also Figure \ref{Fig.LimitShapes}, so \eqref{Eqn:App3} implies that $u_\si$ converges to a multiple of $a$, where the scaling factor is consistent with $u_\si\at{\xi_\si}=\theta$. In summary, we expect the following behavior for small $\si$.
\begin{conjecture}[asymptotics for $\zeta=0$ and $\si\approx 0$] 
\label{Conj:A1}
Suppose that $a$ that is sufficiently smooth at the origin with $a^{\prime\prime}\at{0}<0$. Then, we have
\begin{align}
\label{Conj:A1.Eqn1a}
\si^{-1/3}{\xi_\si}\;\;\xrightarrow{\;\si\searrow0\;}\;\; \frac{3^{1/3}}{2^{1/3}\,\eta^{1/3}\abs{a^{\prime\prime}\at{0}}^{1/3}}
\end{align}
as well as
\begin{align}
\label{Conj:A1.Eqn1b}
\qquad 
\si^{1/3}\ol{w}_\si\at{\ol{x}}\;\;\xrightarrow{\;\si\searrow0\;}\;\;\frac{3^{2/3}\,\eta^{1/3}\,\theta\abs{a^{\prime\prime}\at{0}}^{1/3}}{2^{5/3}\,a\at{0}}\,\ol{\chi}\at{\ol{x}}\,\bat{1-\ol{x}^2}\,,
\end{align}
where $\ol{\chi}$ is the indicator function of $\ol{I}$. Moreover,
\begin{align}
\label{Conj:A1.Eqn2}
\lim_{\si\to0 }u_\si\at{x} = \frac{\theta\,a\at{x}}{a\at{0}}\,,\qquad \lim_{\si\to0 }f\bat{u_\si\at{x}} =0 
\end{align}
holds at least in the sense of pointwise convergence.
\end{conjecture}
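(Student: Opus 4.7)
The plan is to rigorize the formal matching by working at the linear level with the rescaled Krein-Rutmann eigenfunction and then transporting the result to $u_\si$ via the construction of Proposition~\ref{Prop:ExUni}. First, Proposition~\ref{Prop:KRMon} combined with $\la_{\xi_\si}=\si/\eta$ forces $\xi_\si\to0$ as $\si\searrow0$. Setting
\begin{align*}
\tilde{v}_\si\at{\ol{x}}\deq\xi_\si^{1/2}\,v_{\xi_\si}\at{\xi_\si\,\ol{x}}\,,\qquad \ol{x}\in\ol{I}\deq\ccinterval{-1}{+1}\,,
\end{align*}
produces $\norm{\tilde{v}_\si}_{\fspaceL^2\at{\ol{I}}}=1$ and $\tilde{v}_\si\in\widetilde{\calN}_1$ by Proposition~\ref{Prop:KR}, while the rescaled eigenvalue equation reads
\begin{align*}
\la_{\xi_\si}\,\tilde{v}_\si\at{\ol{x}}=\xi_\si\int\limits_{-1}^{+1}a\bat{\xi_\si\at{\ol{x}-\ol{y}}}\tilde{v}_\si\at{\ol{y}}\dint{\ol{y}}\,.
\end{align*}

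I would then Taylor-expand $a$ around the origin using its $C^2$-smoothness with $a^{\prime\prime}\at{0}<0$; the constant term and the $\ol{y}^2$ term cancel because $\tilde{v}_\si$ is odd, and only the cross contribution survives at leading order, leaving
\begin{align*}
\frac{\la_{\xi_\si}}{\xi_\si^3}\,\tilde{v}_\si\at{\ol{x}}=\babs{a^{\prime\prime}\at{0}}\,\ol{x}\int\limits_{-1}^{+1}\ol{y}\,\tilde{v}_\si\at{\ol{y}}\dint{\ol{y}}+R_\si\at{\ol{x}}\,,
\end{align*}
with a remainder $R_\si$ of order $\xi_\si^2$ in $\fspaceL^\infty\at{\ol{I}}$. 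Testing this identity against $\ol{x}$ and using $\int_{-1}^{+1}\ol{x}^2\dint{\ol{x}}=2/3$ yields $\la_{\xi_\si}/\xi_\si^3=\tfrac{2}{3}\babs{a^{\prime\prime}\at{0}}+\DO{\xi_\si^2}$, provided the first moment $m_\si\deq\int_{-1}^{+1}\ol{y}\,\tilde{v}_\si\at{\ol{y}}\dint{\ol{y}}$ stays bounded away from zero. Combined with $\la_{\xi_\si}=\si/\eta$ this produces the scaling \eqref{Conj:A1.Eqn1a}. Plugging the scaling back in shows that $\tilde{v}_\si$ is uniformly close to an affine function of $\ol{x}$, hence equicontinuous on $\ol{I}$; compactness together with the Krein-Rutmann simplicity of Proposition~\ref{Prop:KR} then forces $\tilde{v}_\si\to-\sqrt{3/2}\,\ol{x}$ in $C\at{\ol{I}}$ along any subsequence, hence in full.

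For the remaining assertions, integrating $v_{\xi_\si}$ shows $\tilde{u}_\si\at{\xi_\si\ol{x}}\approx\tfrac12\sqrt{3/2}\,\xi_\si^{1/2}\bat{1-\ol{x}^2}$, so $\tilde{u}_\si/\norm{\tilde{u}_\si}_1$ becomes a Dirac mass at the origin as $\si\to0$. Continuity of $a$ gives $\bat{a\ast\tilde{u}_\si}\at{x}/\norm{\tilde{u}_\si}_1\to a\at{x}$ uniformly on compacts, and evaluation at $x=\xi_\si\to0$ produces $\tau_\si\norm{\tilde{u}_\si}_1\to\theta/a\at{0}$ for the normalization from \eqref{Prop:ExUni.PEqn1}. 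This yields the pointwise limit \eqref{Conj:A1.Eqn2}, and $f\bat{u_\si\at{x}}\to 0$ follows because $\theta\,a\at{x}/a\at{0}\leq\theta$ with strict inequality off the origin while $f=0$ on $\ccinterval{0}{\theta}$ for $\zeta=0$. The rescaled profile \eqref{Conj:A1.Eqn1b} is recovered by inserting the asymptotics for $\tau_\si$ and $\tilde{u}_\si$ into $\ol{w}_\si\at{\ol{x}}=\tau_\si\,\tilde{u}_\si\at{\xi_\si\ol{x}}$ (valid on $\ol{I}$ because $\eta\la_{\xi_\si}/\si=1$) and collecting the remaining powers of $\si$.

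The hardest step will be the quantitative a-priori bound $\liminf_{\si\searrow 0}\babs{m_\si}>0$, without which the rescaling $\xi_\si\sim\si^{1/3}$ could in principle collapse onto a degenerate limit. A clean remedy is to invoke the Rayleigh characterization underlying Proposition~\ref{Prop:KR} on the rescaled interval: the quadratic form $2\calF_{\xi_\si}$ Taylor-expands, after normalization, into a rank-one form proportional to $v\mapsto\babs{\int_{-1}^{+1}\ol{y}\,v\at{\ol{y}}\dint{\ol{y}}}^2$, and the maximizing property of $\tilde{v}_\si$ in $\fspaceL^2_\odd\at{\ol{I}}$ forces $m_\si$ to converge to the nonzero value dictated by the unique odd normalized optimizer $-\sqrt{3/2}\,\ol{x}$ of this reduced functional. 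Controlling $R_\si$ uniformly on $\ol{I}$ then uses only the integrability of the second-order Taylor remainder of $a$ together with the evenness-oddness cancellations already visible in the formal calculation; for the less regular kernels discussed at the end of \S\ref{sect:lin} the hypothesis $a^{\prime\prime}\at{0}<0$ fails and the statement would need to be modified accordingly.
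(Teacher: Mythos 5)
Your calculation is consistent with the paper, but note that the paper itself establishes this statement only as a \emph{conjecture}: its appendix gives a purely formal derivation working with the even rescaled profile $\ol{w}_\si$ in the identity \eqref{Eqn:App0}, inserting the ansatz $\ol{w}_\si\approx c_\si\bat{1-\ol{x}^2}$ and matching the coefficients of $1$ and $\ol{x}^2$ to arrive at \eqref{App.Eqn4}, with no remainder or nondegeneracy control. You take a genuinely different route: you expand the \emph{linear} eigenvalue problem for the rescaled odd Krein--Rutmann eigenfunction, observe that the leading operator is rank-one (the constant, $\ol{x}^2$ and $\ol{y}^2$ contributions dying by parity), pin down the scaling $\la_{\xi_\si}\approx\tfrac23\abs{a^{\prime\prime}\at{0}}\,\xi_\si^3$ by testing against $\ol{x}$, and — crucially — use the Rayleigh characterization \eqref{Prop:KR.PEqn4} to keep the first moment $m_\si$ away from zero, which is exactly the nondegeneracy the paper's coefficient matching silently assumes; you then transport the result to $u_\si$ and $\ol{w}_\si$ through the explicit construction $u_\si=\tau_\si\,a\ast\tilde{u}_\si$ of Proposition~\ref{Prop:ExUni}. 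I checked your constants: they reproduce \eqref{App.Eqn4} and the limits \eqref{Conj:A1.Eqn1a}, \eqref{Conj:A1.Eqn1b}, \eqref{Conj:A1.Eqn2}. What each approach buys: the paper's expansion of $\ol{w}_\si$ is shorter and carries over mutatis mutandis to the other limiting regimes ($\zeta>0$ and $\si\nearrow\eta$), while your route exploits the linearity of the problem for $v_{\xi_\si}$ together with the simplicity statement of Proposition~\ref{Prop:KR}, so it is a plausible path to an actual proof of what the paper leaves open. Two small remarks: the claimed $\DO{\xi_\si^2}$ remainder needs $a$ to be $C^4$ (or $C^3$ plus evenness) near the origin — with only $C^2$ you still get $\Do{1}$ after dividing by $\xi_\si^3$, which suffices for the stated limits and matches the ``sufficiently smooth'' hypothesis; and the compactness/simplicity step for identifying $\lim\tilde{v}_\si$ is dispensable, since the eigenvalue equation already expresses $\tilde{v}_\si$ as an explicit multiple of $\ol{x}$ plus the controlled remainder, the sign of $m_\si$ being fixed by membership in the cone $\calN$. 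Your argument is still a sketch at the level of uniform remainder bounds, but no step looks wrong.
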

We believe that the assertions of Conjecture \ref{Conj:A1} can be derived by standard arguments but notice that \eqref{Conj:A1.Eqn1a}
and \eqref{Conj:A1.Eqn1b} do not cover the kernels \eqref{Eqn:Kernel2} and \eqref{Eqn:Kernel3} since these are not smooth and satisfy $a^{\prime\prime}\at{0}=-\infty$ and $a^{\prime\prime}\at{0}=0$, respectively. Nonetheless, a modified asymptotic analysis should reveal that \eqref{Conj:A1.Eqn2} is still satisfied. For the tent map kernel,  a similar convergence result has been derived in \cite{TV14} for $\zeta>0$ and $\eta\to\infty$. This limit has much in common with the anticontinuum or high-energy limit studied in \cite{HM19b}.
%
%
\paragraph{Small eigenvalues for $\zeta>0$}
%
%
\begin{figure}[t!]%
\centering{%
\includegraphics[width=0.98\textwidth]{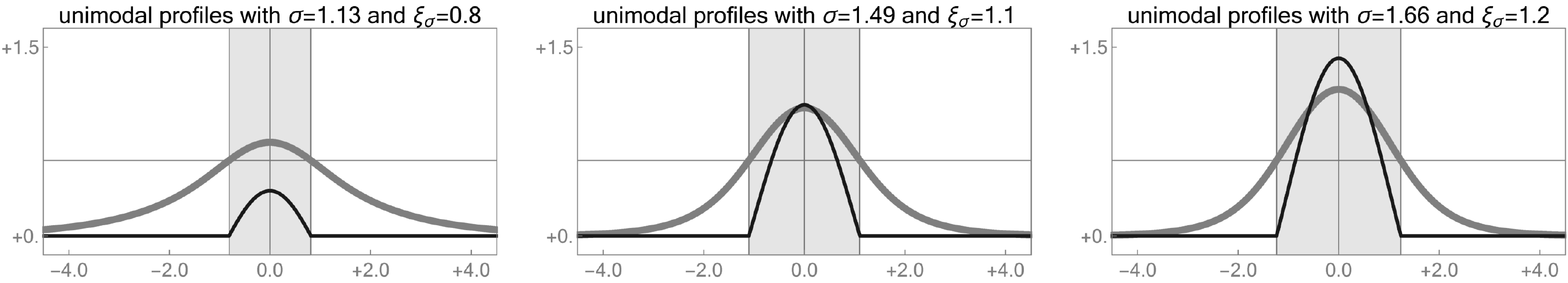}
}%
\caption{%
Numerical simulation with $\zeta=1.0$, $\theta=0.6$, $\eta=2.5$ for the Gaussian kernel \eqref{Eqn:Kernel1}. The computations are performed with the nonlinear improvement dynamics described in \cite{HM20} and the profiles $u_\si$ (gray) and $\tilde{f}\at{u_\si}$ (black) are shown for several values of $\si$. Conjecture \ref{Conj:A3} predicts that $u_\si$ converges as $\si\searrow\zeta$ to the constant function with value $\theta$, but this limit is hard to capture numerically. %
}%
\label{Fig.Ex4U}%
\end{figure}%
In this case, we have to replace the kernel $a$ by the transformed kernel from Proposition \ref{Prop:Transformation}, which we now denote by $\tilde{a}_\si$ as it depends on $\si$, and $\si$ by $\tilde{\si}=\si-\zeta$. The problem is that $\tilde{a}_\si$ does not converge strongly as $\si\searrow \zeta$ but its amplitude gets smaller while its effective width approaches $\infty$, see Figure \ref{Fig.kernels} for an illustration. The precise analysis depends on the singular scaling behavior of $\tilde{a}_\si$. For sufficiently nice kernels we can suppose that the limits
\begin{align}
\label{Eqn:App10}
\ka_0:= \lim_{\si\searrow \zeta}\at{\si-\zeta}^{-1/2}\,\tilde{a}_\si\at{0}\,,\qquad  
\ka_2:= \lim_{\si\searrow \zeta}\at{\si-\zeta}^{-1}\abs{\tilde{a}_\si^{\prime\prime}\at{0}}
\end{align}
are well defined, and direct computations for the Gaussian kernel \eqref{Eqn:Kernel1} provide the values $\ka_0=1$ as well as $\ka_2 = 2\,\zeta\at{3/2}/\sqrt{\pi}$ in terms of the zeta function.
\par
In consistency with Proposition \ref{Prop:Transformation} we base our analysis of
\begin{align*}
\ol{\tilde{w}}_\si\at{\ol{x}} = \tilde{w}_\si\at{\xi_\si\ol{x}}
\,,\qquad 
\tilde{w}_\si\at{x} = \frac{\tilde{f}\at{u_\si\at{x}}}{\tilde{\si}} \,,
\end{align*}
which solves an equation similar to \eqref{Eqn:App0}. The scaling relations encoded by \eqref{Eqn:App10} still allow us to Taylor expand $\tilde{a}_\si$ even though the corresponding cut-off parameter $\xi_\si$ can no longer assumed to be small. In fact, repeating the arguments from above we find
\begin{align*}
\ol{\tilde{w}}\at{\ol{x}}=\tilde{c}_\si\,\at{1-\ol{x}^2}
\end{align*}
as well as
\begin{align*}
\theta +\eta^{-1}\at{\si-\zeta}\,\tilde{c}_\si{}\,\bat{1-\ol{x}^2}\approx \tfrac43\, \ka_0\,\tilde{c}_\si\,
\xi_{\si}\,\,\at{\si-\zeta}^{1/2}-\tfrac{2}{15}\,\ka_2\,\tilde{c}_{\si}\,\xi_\si^3\,\at{\si-\zeta}-\ol{x}^2\, \tfrac23\,\ka_2\,\tilde{c}_{\si}\,\xi_\si^3\,\at{\si-\zeta}\,.
\end{align*}
Equating coefficients gives
\begin{align*}
\tfrac23\,\ka_2\,\xi_\si^3\approx \eta^{-1}\,,\qquad 
\tfrac43\, \ka_0\,\tilde{c}_\si\,
\xi_{\si}\,\,\at{\si-\zeta}^{1/2}\approx \theta
\end{align*}
as well as 
\begin{align*}
\at{\si-\zeta}^{1/2}\,\tilde{w}_\si\approx \frac{3\,\theta}{4\,\ka_0\,\xi_\si}\,\chi_{\xi}\at{x}\,\bat{1-\xi_\si^{-2}\,x^2}\,,
\end{align*}
and in summary we find a limiting behavior that differs considerably from Conjecture \ref{Conj:A1}. In particular, $\at{\si-\zeta}^{1/2}\tilde{w}_\si$
converges to a piecewise smooth limit function with integral $\ka_0^{-1}\,\theta$, whose convolution with the small and slowly varying function $\tilde{a}_\si$ approximates $u_\si$ up to a small prefactor. See also Figure \ref{Fig.Ex4U}.

\begin{conjecture}[asymptotics for $\zeta>0$ and $\si\approx \zeta$] 
\label{Conj:A3}
Suppose that the kernel $a$ is sufficiently regular so that \eqref{Eqn:App10} is satisfied. Then, we have
\begin{align}
\notag
\xi_\si\;\;\xrightarrow{\;\si\searrow0\;}\;\; \frac{3^{1/3}}{2^{1/3}\,\eta^{1/3}\,\ka_2^{1/3}}\,,\qquad 
\at{\si-\zeta}^{1/2}\ol{w}_\si\at{\ol{x}}\;\;\xrightarrow{\;\si\searrow0\;}\;\;\frac{3^{2/3}\,\eta^{1/3}\,\theta\,\ka_2^{1/3}}{2^{5/3}\,\ka_0}\,\ol{\chi}\at{\ol{x}}\,\bat{1-\ol{x}^2}
\end{align}
as well as
\begin{align}
\notag
\lim_{\si\to0 }u_\si\at{x} = \theta\,,\qquad \lim_{\si\to0 }\tilde{f}\bat{u_\si\at{x}} =0 
\end{align}
in the sense of pointwise convergence.
\end{conjecture}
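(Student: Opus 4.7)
The plan is to reduce the problem to the special-case setting of Proposition~\ref{Prop:Transformation} and then carry out a controlled version of the formal expansion that precedes the statement. For fixed $\si$ close to $\zeta$ we work with the transformed kernel $\tilde{a}_\si$ and the transformed eigenvalue $\tilde{\si}=\si-\zeta$, so that the eigenfunction $u_\si$ satisfies $u_\si=\tilde{a}_\si\ast \tilde{w}_\si$ with $\tilde{w}_\si=\tilde{f}(u_\si)/\tilde{\si}$. Rescaling to $\ol{x}=x/\xi_\si$ reduces the problem to a fixed-point equation on $\ol{I}=\ccinterval{-1}{+1}$,
\begin{align*}
\theta+\eta^{-1}\tilde{\si}\,\ol{\tilde{w}}_\si(\ol{x})
=\int\limits_{-1}^{+1}\xi_\si\,\tilde{a}_\si\bat{\xi_\si(\ol{x}-\ol{y})}\,\ol{\tilde{w}}_\si(\ol{y})\dint{\ol{y}}\,,
\qquad\ol{\tilde{w}}_\si(\pm1)=0\,,
\end{align*}
with the consistency relation $\eta^{-1}\tilde{\si}=\la_{\xi_\si}$ furnished by \eqref{Prop:ExUni.PEqn8} applied to the modified kernel. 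All asymptotic information therefore has to be extracted from this single scalar equation.

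The first substantive step is to establish uniform two-sided bounds $0<\xi_-\le\xi_\si\le\xi_+<\infty$ for $\si$ in a right-neighborhood of $\zeta$. Monotonicity of $\xi\mapsto\la_\xi$ from Proposition~\ref{Prop:KRMon} (now applied to $\tilde{a}_\si$) turns this into the question of locating the root of $\la_{\xi}=\eta^{-1}(\si-\zeta)$; the assumed scaling \eqref{Eqn:App10} together with the formal computation of $2\calF_\xi(\tilde{v})$ for a quadratic test function shows that $\la_\xi\sim\tfrac23\ka_2\,\xi^3(\si-\zeta)$ for bounded $\xi$, which pins $\xi_\si$ down to a neighborhood of $(3/(2\eta\ka_2))^{1/3}$. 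Once $\xi_\si$ is uniformly bounded, the rescaling $y\mapsto\xi_\si\,y$ of the arguments of $\tilde{a}_\si$ stays in the regime where \eqref{Eqn:App10} provides a valid second-order expansion of $\tilde{a}_\si$ at the origin, provided we verify that the expansion is \emph{uniform} in $\si$ (i.e.~that $(\si-\zeta)^{-1/2}\tilde{a}_\si$ and $(\si-\zeta)^{-1}\tilde{a}_\si^{\prime\prime}$ converge not only pointwise but also locally uniformly, with a bounded remainder estimate on a fixed interval $\ccinterval{-2\xi_+}{+2\xi_+}$). Inserting the expansion into the rescaled eigenvalue equation and equating the coefficients in front of $1$ and $\ol{x}^2$, as in the formal derivation preceding the conjecture, then yields the stated limits for $\xi_\si$ and $(\si-\zeta)^{1/2}\ol{w}_\si$.

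The main obstacle is precisely this uniform-expansion step for the family $\tilde{a}_\si$. Because $\tilde{a}_\si$ is defined via the Neumann series \eqref{Prop:Transformation.Eqn1} with a contraction constant $\mu=\si^{-1}\zeta$ that tends to $1$ as $\si\searrow\zeta$, the kernel simultaneously flattens (amplitude $\DO{(\si-\zeta)^{1/2}}$) and spreads (effective width diverging), so pointwise asymptotics of $\tilde{a}_\si(0)$ and $\tilde{a}_\si^{\prime\prime}(0)$ do not automatically control the Taylor remainder. A reasonable way forward is to use the Fourier representation $\widehat{\tilde{a}_\si}(k)=\tilde{\si}\,\widehat{a}(k)/(\si-\zeta\widehat{a}(k))$ already appearing in the discussion of \cite{TV14}, rescale $k\mapsto(\si-\zeta)^{1/2}k$, and show that the rescaled symbol converges to an explicit limit (for the Gaussian, to a multiple of $1/(1+\mathrm{const}\cdot k^2)$). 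This would upgrade the pointwise assumption \eqref{Eqn:App10} to locally uniform control in physical space and, via dominated convergence in the rescaled eigenvalue equation, yield both the limiting profile $c(1-\ol{x}^2)$ and the pointwise statement $u_\si\to\theta$ on compacta; the latter follows because $\tilde{a}_\si\ast\tilde{w}_\si$ is the convolution of a slowly varying kernel of near-unit mass with a function whose integral tends to $\ka_0^{-1}\theta$, after matching prefactors one recovers the constant $\theta$ predicted by the conjecture.
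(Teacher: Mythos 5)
Your proposal follows essentially the same route as the paper's own treatment: transform to the $\zeta=0$ setting via Proposition~\ref{Prop:Transformation}, rescale to $\ol{I}$, Taylor-expand the transformed kernel using the scaling hypotheses \eqref{Eqn:App10}, and equate the coefficients of $1$ and $\ol{x}^2$. Bear in mind, though, that the paper itself presents this only as a \emph{formal} derivation of a conjecture, and your text is likewise a programme rather than a proof: the convergence of the rescaled profiles to the quadratic shape, the two-sided localization of $\xi_\si$ (your test-function computation gives only a lower bound for $\la_\xi$, hence only one side of the bound on $\xi_\si$), and above all the uniform-in-$\si$ remainder estimate for the second-order expansion of $\tilde{a}_\si$ are all stated as goals, not established.

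On the one genuinely new ingredient you propose -- upgrading \eqref{Eqn:App10} via the rescaled Fourier symbol -- there is a concrete scale mismatch you should address. The rescaling $k\mapsto\at{\si-\zeta}^{1/2}k$ (equivalently $x\sim\at{\si-\zeta}^{-1/2}$ in physical space) does yield a Lorentzian limit and hence controls $\tilde{a}_\si$ at its diverging width scale, which is what you need for the statement $u_\si\to\theta$. But the coefficient matching in the rescaled eigenvalue equation takes place at the fixed scale $\abs{x}\lesssim\xi_\si=\DO{1}$, where the decisive quantity is the curvature $\tilde{a}_\si^{\prime\prime}\at{0}\sim -\ka_2\at{\si-\zeta}$; this is a subleading correction invisible in the Lorentzian limit shape (which has a corner at the origin, so the rescaled second derivative diverges there). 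For the Gaussian the paper's value $\ka_2=2\,\zeta\at{3/2}/\sqrt{\pi}$ arises from the slowly converging sum of curvatures of the convolution powers in \eqref{Prop:Transformation.Eqn1}, precisely the structure that the limit profile does not see. So convergence of the rescaled symbol cannot by itself deliver the uniform expansion $\tilde{a}_\si\at{x}=\tilde{a}_\si\at{0}+\tfrac12\tilde{a}_\si^{\prime\prime}\at{0}x^2+\Do{\si-\zeta}$ on a fixed interval; one would instead need quantitative remainder bounds at $\DO{1}$ frequencies (for instance uniform control of a fourth derivative, or of $\int\abs{k}^{4}\abs{\widehat{\tilde{a}_\si}\at{k}}\dint{k}$ relative to $\si-\zeta$). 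As it stands, the proposal matches the paper's formal argument but does not close the gap that makes the statement a conjecture.
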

The different limit behaviors for $\zeta=0$ and $\zeta>0$ can also be understood heuristically as follows. For $\zeta=\si=0$, there exists a plethora of nonlinear eigenfunction in $\calU$, namely any function $u\in\calU$ which satisfies $f\at{u}\equiv0$ due to $\norm{u}_\infty=u\at{0}\leq\theta$. In the case of $\zeta>0$, however, $\si=\zeta$ combined with $0\leq u\leq\theta$ reduces \eqref{Eqn:EV1} to $u= a\ast u$, but the only fixed points of the convolution operator are the constant functions.
\paragraph{Large eigenvalues}
%
%
We start again with $\zeta=0$ but it turns out that our formal asymptotic results cover
the case $\zeta>0$ as well. Since $\xi_\si$ is now large, we restate \eqref{Eqn:App8} as
\begin{align*}
\at{\ol{\calA}_\xi \ol{w}}\at{\ol{x}}=
\int\limits_{\xi\at{\ol{x}-1}}^{\xi\at{\ol{x}+1}} a\bat{y}\,\ol{w}\at{\ol{x}+\xi^{-1}y}\dint{y}
\end{align*} 
and employ the formal asymptotic expansion
\begin{align*}
\at{\ol{\calA}_\xi \ol{w}}\at{\ol{x}}\approx
\ol{w}\at{\ol{x}}-m\,\xi^{-2}\,\ol{w}^{\prime\prime}\at{x}
\int\limits_{\xi\at{\ol{x}-1}}^{\xi\at{\ol{x}+1}} a\bat{y}\,\ol{w}\at{\ol{x}+\xi^{-1}y}\dint{y}\,.
\end{align*} 
This formula involves
\begin{align*}
m:=-\tfrac12\int\limits_{-\infty}^{\infty} y^2 a\at{y}\dint{y}
\end{align*}
and holds for any fixed $-1<\ol{x}<+1$ thanks to Assumption \ref{Ass:Kernel}. In combination with \eqref{Eqn:App0} we thus obtain the approximate ODE
\begin{align*}
\theta + \la_\si\, \ol{w}_\si\approx\ol{w}_\si - \xi_\si^{-2}\,m\, \ol{w}^{\prime\prime}\,,
\end{align*}
which admits the consistent solution
\begin{align}
\label{Eqn:App1}
\la_\si \approx 1-\xi_\si^{-2}\,\pi^2\,m\,,\qquad \ol{w}_\si\at{\ol{x}}\approx d_\si \,\bat{1+\cos\at{\pi \ol{x}}}\,,\qquad d_\si = \xi_\si^2\frac{\theta}{\pi^2\,m}\,.
\end{align}
These relation imply
\begin{align}
\label{App.Eqn5}
w_\si\at{x}\approx \frac{\theta\,\xi_\si^2}{\pi^2\,m}\,\chi_{\xi_\si}\at{x}\,\bat{1+\cos\at{\pi\,\xi_\si^{-1}\,x }}
\end{align}
and give rise to the following claim.  
\begin{conjecture}[asymptotics for $\zeta=0$ and $\si\approx\eta $] 
\label{Conj:A2}%
For any kernel $a$ that decays sufficiently fast at infinity, we  have
\begin{align}
\label{Conj:A2.Eqn1}
\at{\eta-\si}^{1/2}{\xi_\si}\;\;\xrightarrow{\;\si\nearrow\eta\;}\;\; \pi\,m^{1/2}\,\eta^{1/2}\,,\qquad 
\at{\eta-\si}\ol{w}_\si\at{\ol{x}}\;\;\xrightarrow{\;\si\nearrow\eta\;}\;\;\eta\,\theta\,\ol{\chi}\at{\ol{x}}\,\at{1-\cos\at{\pi\ol{x}}}
\end{align}
and hence $\norm{u_\si}_p\sim\at{\eta-\si}^{-1/2-1/\at{2p}}$ for any $p\in\ccinterval{1}{\infty}$. 
\end{conjecture}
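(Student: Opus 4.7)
The plan is to reduce Conjecture~\ref{Conj:A2} to the spectral analysis of a Dirichlet eigenvalue problem on the rescaled interval $\overline{I}=\ccinterval{-1}{+1}$. The starting observation is that Proposition~\ref{Prop:KRMon} guarantees $\xi_\sigma\to\infty$ as $\sigma\nearrow\eta$ since $\lambda_{\xi_\sigma}=\sigma/\eta\to 1$ by \eqref{Prop:ExUni.PEqn8}. The key analytic input is the pseudo-differential expansion
\begin{equation*}
\bat{\overline{\calA}_\xi\,\overline{w}}\at{\overline{x}} \;=\; \overline{w}\at{\overline{x}} - \xi^{-2}\,m\,\overline{w}''\at{\overline{x}} + \DO{\xi^{-4}}\,,
\end{equation*}
valid for smooth $\overline{w}$ and $\overline{x}$ in any fixed compact subset of $\oointerval{-1}{+1}$. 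It is derived by substituting $z=\xi\at{\overline{x}-\overline{y}}$ in \eqref{Eqn:App8}, Taylor expanding $\overline{w}\at{\overline{x}-z/\xi}$, and using Assumption~\ref{Ass:Kernel} together with the finiteness of the second moment $-2m=\int y^2\,a\at{y}\dint{y}$.

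I would then attack the linear Krein-Rutmann eigenvalue problem $\lambda_\xi v_\xi = \calA_\xi v_\xi$ in rescaled variables; at leading order this becomes the Dirichlet problem
\begin{equation*}
-\at{-m}\,\overline{v}'' \;=\; \at{1-\lambda}\,\xi^2\,\overline{v}\quad\text{on}\quad\oointerval{-1}{+1}\,,\qquad \overline{v}\at{\pm 1}=0\,,
\end{equation*}
whose Dirichlet boundary conditions come from the compact support of $v_\xi$ imposed by the cut-off $\chi_\xi$ in \eqref{Eqn:DefOpA}. Restricting to odd functions (since $v_\xi\in\fspaceL^2_\odd\at\Rset$), the smallest odd Dirichlet eigenvalue is $\pi^2$ with eigenfunction proportional to $\sin\at{\pi\overline{x}}$, so one predicts $\at{1-\lambda_\xi}\xi^2\to\pi^2\at{-m}$. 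Inverting $\lambda_{\xi_\sigma}=\sigma/\eta$ then gives $\at{\eta-\sigma}^{1/2}\xi_\sigma\to\pi\at{-m}^{1/2}\eta^{1/2}$, which matches the first assertion of the conjecture up to the sign convention used for $m$. Rigorously, a matching upper bound on $\lambda_\xi$ is obtained by evaluating the variational principle \eqref{Prop:KR.PEqn4} on the trial function $\xi^{-1/2}\sin\at{\pi x/\xi}\chi_\xi\at{x}$ and expanding the Rayleigh quotient to order $\xi^{-2}$; a matching lower bound comes from extracting a weakly convergent subsequence of normalized rescaled eigenfunctions $\overline{v}_{\xi_n}$ from an $H^1_0\at{\overline{I}}$ bound, and identifying the limit as the principal odd Dirichlet eigenfunction via the simplicity result of Proposition~\ref{Prop:KR}.

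With the eigenvalue scaling established, the profile convergence of $\overline{w}_\sigma$ follows from the reconstruction formulas of the proof of Proposition~\ref{Prop:ExUni}: integrating $u_\sigma'=\tau_\sigma\lambda_{\xi_\sigma}v_{\xi_\sigma}$, fixing $\tau_\sigma$ through a boundary-layer expansion of $\at{a\ast\tilde{u}_\sigma}\at{\xi_\sigma}$ near $\xi_\sigma$, and inserting the local relation $w_\sigma=\lambda_\sigma^{-1}\at{u_\sigma-\theta}$ on $I_{\xi_\sigma}$ produces the asymptotic shape. The $L^p$ asymptotics then follow because the dominant contribution to $\norm{u_\sigma}_p^p$ comes from $I_{\xi_\sigma}$, on which $u_\sigma$ is well approximated by a rescaled bump whose height and width are determined by the preceding scalings, with tails controlled by Young's inequality applied to $u_\sigma=\sigma^{-1}a\ast f\at{u_\sigma}$. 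The main obstacle throughout is the boundary layer near $\overline{x}=\pm 1$, where the pointwise Taylor expansion of $\overline{\calA}_\xi$ breaks down because the convolution captures the jump of $\chi_{\ccinterval{-1}{+1}}\overline{w}$; I would handle this by controlling the $L^2$ mass of $\overline{v}_{\xi_\sigma}$ in a shrinking neighborhood of $\pm 1$ via an energy estimate derived from a refined version of the Rayleigh quotient, in the spirit of the operator-theoretic treatment of elliptic boundary layers.
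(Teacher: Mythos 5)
You have, in substance, reproduced the paper's own (and only) supporting argument: the statement is an explicit conjecture, and the appendix derives \eqref{Conj:A2.Eqn1} by exactly the second-order Taylor expansion of the rescaled operator \eqref{Eqn:App8} that you write down, applied to the inhomogeneous profile equation \eqref{Eqn:App0} for $\ol{w}_\si$ rather than to the Krein-Rutmann eigenfunction; after differentiation the two routes coincide, and the paper itself records your approximation $v_\xi\at{x}\approx-\xi^{-1/2}\chi_\xi\at{x}\sin\at{\pi x/\xi}$ together with the relation $\la_\si\approx1-\pi^2\,m\,\xi_\si^{-2}$ of \eqref{Eqn:App1}. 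So the formal core of your argument is the same as the paper's, including your (correct) handling of the sign convention for $m$.

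The gap lies in the part where you claim this can be upgraded to a rigorous proof: those steps are precisely the ones the paper declares open, and your plan names them without closing them. First, the two variational directions are interchanged: since \eqref{Prop:KR.PEqn4} is a maximum, evaluating it at the trial function $\xi^{-1/2}\chi_\xi\at{x}\sin\at{\pi x/\xi}$ yields a \emph{lower} bound for $\la_\xi$, hence only $\at{1-\la_\xi}\xi^2\leq\pi^2\abs{m}+\Do{1}$; the complementary estimate requires quantitative control of the true maximizer. Second, for that hard direction you invoke a uniform $H^1_0$ bound on the rescaled eigenfunctions, strong convergence (so that the weak limit is nonzero), exclusion of concentration near $\ol{x}=\pm1$, and identification of the limit with the lowest odd Dirichlet mode; none of this is derived, and the simplicity statement of Proposition~\ref{Prop:KR} (which is for fixed $\xi$) does not provide it. The genuine obstruction, which an ``energy estimate in the spirit of elliptic boundary layers'' does not address, is that as $\xi\to\infty$ the eigenvalues of $\calA_\xi$ accumulate at $1$, the edge of the spectrum of the limit operator $\calA_\infty w=a\ast w$, so the maximizers sit in a cluster of nearly degenerate eigenvalues and weak limits may vanish; the paper itself notes that uniform bounds of the type $\nnorm{v_\xi''}_p\leq C_p\,\xi^{1/p-5/2}$ would be needed and that no reference is known. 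Finally, your last two steps (reconstruction of $\ol{w}_\si$ and the $L^p$ asymptotics) are purely schematic; carrying them out from the scalings you derived gives the profile $1+\cos\at{\pi\ol{x}}$ (vanishing at $\ol{x}=\pm1$) rather than the $1-\cos\at{\pi\ol{x}}$ written in \eqref{Conj:A2.Eqn1}, and amplitude $d_\si\sim\theta\,\eta\,\at{\eta-\si}^{-1}$ with width $\xi_\si\sim\at{\eta-\si}^{-1/2}$, hence $\norm{u_\si}_p\sim\at{\eta-\si}^{-1-1/\at{2p}}$ rather than the stated exponent --- discrepancies in the conjecture's formulation that a complete argument would have to detect and resolve.
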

The limit $\sigma\nearrow\zeta+\eta$ in case of $\zeta>0$ involves again the transformed kernel from Proposition \ref{Prop:Transformation.Eqn1}, which satisfy $\tilde{a}_\si\to\tilde{a}_\eta$. We therefore expect that \eqref{Conj:A2.Eqn1} remains true provided that $\si-\eta$ is replaced by $\tilde{\si}-\eta=\si-\zeta-\eta$ and $m$ is computed with $\tilde{a}_\eta$ instead of $a$. 
\par
We further emphasize that the scaling relations \eqref{Conj:A2.Eqn1} are consistent with our numerical simulations in Figures \ref{Fig.Ex1U} and \ref{Fig.Ex1N}. The analytical justification of Conjecture \ref{Conj:A2} and the underlying approximation  \eqref{Eqn:App1} is much harder than the rigorous derivation of \eqref{Conj:A1.Eqn1a} and \eqref{Conj:A1.Eqn1b} because the Taylor expansion is now  applied to $\ol{w}_\si$ and  requires uniform estimates of $\norm{\ol{w}_\si^{\prime\prime}}_p$ for at least one $p\in\ccinterval{1}{\infty}$. A similar problem concerns the linear eigenvalue problem from \S\ref{sect:lin}. In order to ensure the consistent approximation
\begin{align*}
v_\xi\at{x}\approx -\at{\xi}^{-1/2}\,\chi_{\xi}\at{x}\,\sin\bat{\xi^{-1}\pi x }
\end{align*}
for the normalized eigenfunction from Proposition \ref{Prop:KR}, we have to guarantee that
\begin{math}
\nnorm{v_\xi^{\prime\prime}}_p\leq C_p\,\xi^{\frac{1}{p}-\frac{5}{2}}
\end{math}  
holds for all large $\xi$ and some constant $C_p$ independent of $\xi$. We are, however, not  ware of a corresponding reference. Moreover, the limit $\xi\to \infty$ is rather intricate due to a huge number of nearby eigenvalues that discretize the continuous spectrum of the limit operator $\calA_\infty w=a\ast w$.
%
%
%
%
%

\end{document}